\newtheorem{theorem}{Theorem}
\newtheorem{corollary}[theorem]{Corollary}
\newtheorem{lemma}[theorem]{Lemma}
\newtheorem{proposition}[theorem]{Proposition}
\theoremstyle{remark} 
\newtheorem{remark}[theorem]{Remark}
\theoremstyle{definition} 
\newtheorem{definition}[theorem]{Definition}
\numberwithin{theorem}{section}
\numberwithin{equation}{section}
\def\N{{\mathbb N}}
\def\R{{\mathbb R}}
\def\F{{\mathbb F}}
\def\G{{\mathbb G}}
\def\U{{\mathbb U}}
\renewcommand{\vec}[1]{\bm{#1}}
\newcommand{\Dom}{\mathcal{O}}
\newcommand{\BDom}{\partial\Dom}
\newcommand{\n}{\Vert}
\newcommand{\supp}{\text{\rm supp\,}}
\newcommand{\Dir}{{\rm Dir}}
\newcommand{\ii}{{\rm i}} 
\newcommand{\mc}{\mathcal}
\newcommand{\Hinf}{H^{\infty}}
\newcommand{\RR}{\mathbb{R}}
\newcommand{\CC}{\mathbb{C}}
\newcommand{\NN}{\mathbb{N}}
\newcommand{\OO}{\mathcal{O}}
\renewcommand{\SS}{\mathcal{S}}
\newcommand{\half}{\frac{1}{2}}
\newcommand{\RRdh}{\RR^d_+}
\newcommand{\RRd}{\RR^d}
\newcommand{\Cc}{C_{\mathrm{c}}}
\renewcommand{\d}{\partial}
\newcommand{\del}{\Delta}
\newcommand{\grad}{\nabla}
\newcommand{\delDir}{\del_{\operatorname{Dir}}}
\newcommand{\ph}{\varphi}
\newcommand{\gam}{\gamma}
\newcommand{\Tr}{\operatorname{Tr}}
\newcommand{\ext}{\operatorname{ext}}
\newcommand{\loc}{{\rm loc}}
\renewcommand{\tilde}[1]{\widetilde{#1}}
\renewcommand{\hat}[1]{\widehat{#1}}
\newcommand{\dist}{\operatorname{dist}}
\newcommand{\D}{\mathcal{D}}
\renewcommand{\b}{{\rm b}}
\renewcommand{\n}{\vec{n}}
\DeclareMathAlphabet{\mathpzc}{OT1}{pzc}{m}{it}
\newcommand{\pzcd}{\mathpzc{d}}
\newcommand{\dd}{\hspace{2pt}\mathrm{d}}
\DeclareMathOperator{\UMD}{UMD}
\DeclareMathOperator{\id}{id}
\author[R. Denk]{Robert Denk}
\address[Robert Denk]{Universit\"at Konstanz, Fachbereich Mathematik und Statistik, 78357 Konstanz, Germany}\email{robert.denk@uni-konstanz.de}
\author[F.B. Roodenburg]{Floris B. Roodenburg}
\address[Floris Roodenburg]{Delft Institute of Applied Mathematics\\
Delft University of Technology \\ P.O. Box 5031\\ 2600 GA Delft\\The
Netherlands} \email{f.b.roodenburg@tudelft.nl}
\begin{document}
\title[]{Trace theory for parabolic boundary value problems with rough boundary conditions}

\makeatletter
\@namedef{subjclassname@2020}{
  \textup{2020} Mathematics Subject Classification}
\makeatother

\subjclass[2020]{Primary: 35K20, 46E35; Secondary: 46E40.}
\keywords{Trace space, weights, maximal regularity, heat equation, rough domains, rough boundary data}

\thanks{The second author is supported by the VICI grant VI.C.212.027 of the Dutch Research Council (NWO)}

\begin{abstract}
We characterise the trace spaces arising from intersections of weighted, vector-valued Sobolev spaces, where the weights are powers of the distance to the boundary. These weighted function spaces are particularly suitable for treating boundary value problems where derivatives of the solution blow up at the boundary. As an application of our trace theory, we prove well-posedness for the heat equation with rough inhomogeneous boundary data in Sobolev
spaces of higher regularity in domains of fixed regularity $C^{1,\kappa}$, with $\kappa \in [0,1)$.
\end{abstract}

\maketitle

\setcounter{tocdepth}{1}
\tableofcontents

\section{Introduction}
The analysis of boundary value problems with inhomogeneous boundary data requires a precise understanding of the underlying function spaces and their spatial trace spaces. For instance, consider the heat equation on a bounded domain $\OO\subseteq \RR^d$ with Dirichlet boundary conditions, i.e.,
\begin{equation}\label{eq:intro_heateq}
\left\{
  \begin{aligned}
  &\d_t u -\del u = f\qquad &&t>0,\, x\in \OO,\\
   &u|_{\d\OO}= g\qquad &&t>0,\\
   &u|_{t=0}=0\qquad && x\in \OO.
\end{aligned}\right.
\end{equation}
A classical problem is to find necessary and sufficient conditions for the data $f$ and $g$ such that \eqref{eq:intro_heateq} is well-posed and has a solution 
\begin{equation}\label{eq:intro_solspace}
    u\in W^{1,q}(\RR_+; L^p(\OO))\cap L^q(\RR_+; W^{2,p}(\OO)),\qquad p,q\in(1,\infty),
\end{equation}
that depends continuously on the data. It is well known that $f \in L^q(\RR_+; L^p(\OO))$ is a natural condition, while identifying the optimal function space for the boundary data $g$ is more delicate. This problem has been extensively studied in the literature, first for $p=q$ in \cite{LSU68} and later in \cite{DHP07, JS08, Weid98, Weid02, Weid05} for the more general case $p\neq q$. The case $p\neq q$ is particularly relevant for nonlinear problems, where one wants to exploit the scaling invariance of the problem, see, e.g., \cite{Gi86}. Moreover, we note that the end-point case $q=1$ is studied in \cite{OS20,OS22}. 

For \eqref{eq:intro_heateq} on a bounded $C^2$-domain $\OO$, it turns out that the boundary data $g$ has to be in the intersection space
\begin{equation}\label{eq:intro_tracespace_unweighted}
   F^{1-\frac{1}{2p}}_{q,p}(\RR_+; L^p(\d\OO))\cap L^q(\RR_+; B^{2-\frac{1}{p}}_{p,p}(\d\OO)), 
\end{equation}
where $F_{q,p}^s$ and $B^{s}_{p,p}$ denote the Triebel--Lizorkin and Besov spaces, respectively. The space \eqref{eq:intro_tracespace_unweighted} is the optimal space for the boundary data, meaning that the Dirichlet trace operator from the solution space \eqref{eq:intro_solspace} to the spatial trace space \eqref{eq:intro_tracespace_unweighted} is continuous and surjective. \\

In this paper, we study mapping properties of trace operators in a weighted framework. Specifically, we consider spatial weights of the form $w_{\gam}^{\d\OO}(x):=\dist(x, \d\OO)^{\gam}$ for some suitable $\gam>-1$. These weights are well suited to control boundary singularities of solutions and their derivatives, see \cite{LLRV24,LLRV25, LV18}. As a special case of our main result in Theorem \ref{thm:trace_char_dom}, we obtain the following weighted trace result, which is suitable for treating \eqref{eq:intro_heateq} in the weighted setting.
\begin{theorem}\label{thm:intro_trace}
    Let $p,q\in (1,\infty)$, $k\in \NN_0$, $\kappa\in[0,1)$, $\gam\in ((1-\kappa)p-1, 2p-1)\setminus\{p-1\}$. Furthermore, let $\OO$ be a bounded $C^{1,\kappa}$-domain. Then the trace operator $\Tr^{\d\OO}$ from 
    \begin{equation*}
        W^{1,q}(\RR_+; W^{k,p}(\OO, w_{\gam+kp}^{\d\OO}))\cap L^q(\RR_+; W^{k+2,p}(\OO, w_{\gam+kp}^{\d\OO}))
    \end{equation*}
    to 
    \begin{equation*}
        F^{1-\frac{\gam+1}{2p}}_{q,p}(\RR_+; L^p(\d\OO))\cap L^q(\RR_+, B_{p,p}^{2-\frac{\gam+1}{p}}(\d\OO))
    \end{equation*}
    is continuous and surjective. Moreover, there exists a continuous right inverse $\ext^{\d\OO}$ of $\Tr^{\d\OO}$.
\end{theorem}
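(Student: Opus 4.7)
The plan is a standard reduction to the half-space via localization and flattening of $\d\OO$, combined with a trace characterization on $\RR^d_+$ for the weighted isotropic Sobolev space. I would pick a finite open cover $\{U_0, U_1, \ldots, U_N\}$ of $\overline{\OO}$ with $\overline{U_0} \subset \OO$ and, for each $j \ge 1$, a $C^{1,\kappa}$-diffeomorphism $\Psi_j$ straightening $\d\OO \cap U_j$ to a piece of $\{x_d = 0\}$ and mapping $\OO \cap U_j$ into $\RR^d_+$; pair this with a smooth partition of unity $\{\zeta_j\}$ subordinate to the cover. Decomposing $u = \sum_j \zeta_j u$, the interior piece $\zeta_0 u$ has vanishing trace and only the boundary pieces $\zeta_j u$ ($j \ge 1$) contribute.

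For the half-space model, I would prove that $u \mapsto u|_{x_d = 0}$ is bounded and surjective from
\begin{equation*}
W^{1,q}(\RR_+; W^{k,p}(\RR^d_+, x_d^{\gam+kp})) \cap L^q(\RR_+; W^{k+2,p}(\RR^d_+, x_d^{\gam+kp}))
\end{equation*}
onto
\begin{equation*}
\TL{1-\frac{\gam+1}{2p}}{q}{p}(\RR_+; L^p(\RR^{d-1})) \cap L^q(\RR_+; \Be{2-\frac{\gam+1}{p}}{p}{p}(\RR^{d-1})),
\end{equation*}
with a continuous right inverse. The spatial part is the classical weighted trace theorem for power weights: the trace of $W^{k+2,p}(\RR^d_+, x_d^{\gam+kp})$ lies in $\Be{2-(\gam+1)/p}{p}{p}(\RR^{d-1})$, since $k + 2 - (\gam + kp + 1)/p = 2 - (\gam+1)/p$, so the weight shift $kp$ is precisely designed to absorb the extra derivatives. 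The upper bound $\gam < 2p - 1$ ensures the trace exists, while $\gam = p-1$ is the only critical value in the admissible range and must be excluded. The Triebel--Lizorkin structure in time arises from the mixed-derivative / anisotropic trace theorem for intersections of Bochner spaces valued in UMD lattices. A right inverse can be built by composing a parabolic Poisson-type extension in time with the standard Besov extension across $x_d = 0$.

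To globalize to $\OO$, each boundary piece $\zeta_j u$ is pushed forward by $\Psi_j$ to a compactly supported function on $\RR^d_+$, the half-space result is applied, and the resulting traces are pulled back to $\d\OO$; the extension operator $\ext^{\d\OO}$ is defined by the analogous gluing of local half-space right inverses against a second partition of unity. The main obstacle is showing that the pullback by the $C^{1,\kappa}$-chart $\Psi_j$ is a topological isomorphism of the weighted Sobolev spaces on $U_j \cap \OO$ and $V_j \cap \RR^d_+$. Because $\Psi_j$ has only Hölder continuous derivatives, applying the chain rule up to order $k+2$ generates factors involving $\kappa$-Hölder functions; these are controllable precisely because the effective spatial smoothness $k + 2 - (\gam + kp)/p = 2 - \gam/p$ of the solution space is independent of $k$, and the condition $\gam > (1-\kappa)p - 1$ rewrites as $2 - (\gam+1)/p < 1 + \kappa$. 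Thus both the effective interior smoothness and the target boundary Besov regularity $2 - (\gam+1)/p$ lie strictly below the chart regularity $1 + \kappa$, and the classical diffeomorphism invariance of such $W^{2,p}$- and Besov-type scales applies. The weight $\dist(x, \d\OO)^{\gam + kp}$ is comparable to $x_d^{\gam + kp}$ on $V_j \cap \RR^d_+$ up to a bounded Jacobian factor, so the weighted norms transform equivalently. Once this chart compatibility is verified, continuity, surjectivity, and the existence of $\ext^{\d\OO}$ follow by a routine assembly from the half-space model, the time variable being unaffected by the spatial localization.
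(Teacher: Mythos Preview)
Your overall strategy --- half-space trace characterisation plus localisation via partition of unity and boundary-flattening diffeomorphisms --- matches the paper's, and your identification of the half-space target space is correct. However, there is a genuine gap in the localisation step.

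You propose to use ordinary $C^{1,\kappa}$-charts $\Psi_j$ and argue that the pullback $\Psi_j^*$ is a topological isomorphism of $W^{k+2,p}(\OO\cap U_j, w_{\gam+kp}^{\d\OO})$. Your justification is an ``effective smoothness'' heuristic: since $k+2-(\gam+kp)/p = 2-\gam/p$ is below $1+\kappa$, the H\"older factors arising from the chain rule should be harmless. But the space $W^{k+2,p}(\OO, w_{\gam+kp}^{\d\OO})$ genuinely requires $k+2$ weak derivatives, and the Fa\`a di Bruno formula for $\d^{\alpha}(u\circ\Psi_j^{-1})$, $|\alpha|\le k+2$, produces terms containing $\d^{\beta}\Psi_j^{-1}$ with $|\beta|\ge 2$. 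For a standard boundary-flattening chart $\Psi_j(x)=(x_1-h(\tilde x),\tilde x)$ with $h\in C^{1,\kappa}$, $\kappa<1$, these derivatives simply do not exist. Already for $k=0$, the tangential second derivative contains $(\d_i\d_j h)\cdot(\d_1 u)$, and $\d_i\d_j h$ is only a distribution of negative regularity; there is no multiplier argument that places this product in $L^p(w_\gam)$ uniformly. For $k\ge 1$ the situation is worse. The ``effective smoothness'' interpretation would be relevant if $W^{k+2,p}(\OO,w_{\gam+kp}^{\d\OO})$ were \emph{equal} to a Bessel potential space of order $<1+\kappa$, but one only has a one-sided embedding (via Hardy), which is useless for the extension direction where you must land back in the high-order space.

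The paper resolves this by replacing the naive chart with the \emph{Dahlberg--Kenig--Stein pullback} $\Phi$: a $C^{1,\kappa}$-diffeomorphism that is $C^{\infty}$ in the interior of $\OO$, built by mollifying $h$ at scale comparable to the distance to the boundary. Its higher derivatives exist everywhere in $\OO$ but blow up like $\dist(x,\d\OO)^{1+\kappa-|\alpha|}$ as $x\to\d\OO$. This blow-up is absorbed precisely by the weight $w_{\gam+kp}^{\d\OO}$ under the condition $\gam>(1-\kappa)p-1$ (this is where your lower bound on $\gam$ actually enters), yielding a genuine isomorphism $\Phi_*\colon W^{k+2,p}(\OO,w_{\gam+kp}^{\d\OO})\to W^{k+2,p}(\RR^d_+,w_{\gam+kp})$ for \emph{all} $k$; see Proposition~\ref{prop:isom}. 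Your argument can be repaired by substituting this pullback for the standard chart, but the construction and the required estimates on its derivatives are the non-trivial input you are missing.
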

In addition, we prove a variant of Theorem \ref{thm:intro_trace} with higher-order traces and more time and space regularity, which is required for treating higher-order parabolic equations with inhomogeneous boundary conditions. Furthermore, our approach also covers the vector-valued setting with temporal weights on bounded time intervals. The proof of Theorem \ref{thm:intro_trace} is based on an extension of the trace theory for weighted anisotropic Triebel--Lizorkin spaces as developed in \cite{JS08, Li20} and intersection representations of these spaces. We note that Theorem \ref{thm:intro_trace} (see also Theorem \ref{thm:trace_char_dom}) advances the existing literature in several respects.
\begin{enumerate}[(i)]
    \item We consider spaces with higher-order spatial Sobolev regularity. This allows us to deal with higher-order regularity of \eqref{eq:intro_heateq}, see Theorem \ref{thm:intro_heateq}. It is important to note that the trace space is \emph{independent} of the smoothness $k$, as is also indicated by trace theory for weighted Sobolev spaces, see, e.g., \cite{KimD07, Ro25}. The weighted setting is particularly convenient for higher-order regularity since it avoids any compatibility conditions in the underlying Banach spaces, which are necessary to obtain, for instance, sectoriality of the Laplacian, see \cite{DD11} and \cite{LLRV24, LLRV25} for a more elaborate discussion. 
    \item The regularity of the domain $\OO$ depends on the weight exponent $\gam$, but \emph{not} on the smoothness $k$. This setting coincides with \cite{KimD07, LLRV25}, where the Dahlberg--Kenig--Stein pullback is used for localising results from the half-space to domains.
    \item In Theorem \ref{thm:trace_char_dom} we deal with higher-order traces $\overline{\Tr}_m^{\d\OO}$, which is in contrast to \cite[Theorem 7.1]{LV18}, where trace theory in the weighted setting was developed for second-order parabolic equations with Dirichlet boundary conditions. The higher-order traces model Dirichlet boundary conditions for higher-order parabolic boundary value problems, such as in \cite{DS15}. In upcoming work, we will use this general trace theory for studying a structurally damped plate equation with rough boundary conditions. In addition, we mention that in \cite{HL22, Li20} higher-order trace spaces are characterised for function spaces with mainly Muckenhoupt weights.
\end{enumerate}

As a first application of our trace theory from Theorem \ref{thm:intro_trace}, we study \eqref{eq:intro_heateq} in the setting of weighted Sobolev spaces. In the case of homogeneous boundary conditions $g=0$, functional analytic properties of the Laplacian on $W^{k,p}(\OO, w_{\gam+kp}^{\d\OO})$ are studied in \cite{LLRV24, LLRV25, LV18}.
Our main result for \eqref{eq:intro_heateq} with inhomogeneous boundary conditions reads as follows. This is a special case of Theorem \ref{thm:heateq_0T}, where for simplicity we omit the temporal weights.
\begin{theorem}\label{thm:intro_heateq}
    Let $p,q\in (1,\infty)$, $k\in \NN_0$, $\kappa\in[0,1)$, $\gam\in ((1-\kappa)p-1, 2p-1)\setminus\{p-1\}$ with $1-\frac{1}{q}\neq \frac{\gam+1}{2p}$. Furthermore, let $\OO$ be a bounded $C^{1,\kappa}$-domain. Then for all
    \begin{equation*}
        f\in L^q(\RR_+; W^{k,p}(\OO, w_{\gam+kp}^{\d\OO}))\quad \text{ and }\quad g\in F^{1-\frac{\gam+1}{2p}}_{q,p}(\RR_+; L^p(\d\OO))\cap L^q(\RR_+, B^{2-\frac{\gam+1}{p}}_{p,p}(\d\OO)),
    \end{equation*}
   such that $g|_{t=0}=0$ if $1-\frac{1}{q}>\frac{\gam+1}{2p}$, there exists a unique solution $$u\in W^{1,q}(\RR_+; W^{k,p}(\OO, w_{\gam+kp}^{\d\OO}))\cap L^q(\RR_+; W^{k+2,p}(\OO, w_{\gam+kp}^{\d\OO}))$$ to \eqref{eq:intro_heateq}. Moreover, this solution $u$ depends continuously on the data $f$ and $g$. 
\end{theorem}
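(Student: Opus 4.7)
My approach is the standard reduction to a problem with homogeneous boundary data, followed by invoking known maximal regularity for the Dirichlet Laplacian on weighted Sobolev spaces. Applying Theorem \ref{thm:intro_trace} to $g$, I obtain an extension
\begin{equation*}
v := \ext^{\d\OO} g \in W^{1,q}(\RR_+; W^{k,p}(\OO, w_{\gam+kp}^{\d\OO})) \cap L^q(\RR_+; W^{k+2,p}(\OO, w_{\gam+kp}^{\d\OO}))
\end{equation*}
with $\Tr^{\d\OO} v = g$ and norm controlled by the norm of $g$ in the trace space. I will need $v|_{t=0} = 0$ in the appropriate interpolation space $(W^{k,p}, W^{k+2,p})_{1-1/q, q}$, so that subtracting $v$ preserves the vanishing initial condition; this is achievable in the regime $1-\frac{1}{q} < \frac{\gam+1}{2p}$ by modifying $\ext^{\d\OO}$ near $t=0$, and it is consistent with the compatibility $g|_{t=0}=0$ in the complementary case $1-\frac{1}{q} > \frac{\gam+1}{2p}$.

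Setting $\tilde u := u - v$, the original problem is equivalent to finding $\tilde u$ in the same solution class satisfying
\begin{equation*}
\d_t \tilde u - \del \tilde u = f - \d_t v + \del v =: \tilde f \text{ in } \OO, \qquad \tilde u|_{\d\OO} = 0, \qquad \tilde u|_{t=0} = 0,
\end{equation*}
with $\tilde f \in L^q(\RR_+; W^{k,p}(\OO, w_{\gam+kp}^{\d\OO}))$. Unique solvability in the target class is precisely maximal $L^q$-regularity for the Dirichlet Laplacian on $W^{k,p}(\OO, w_{\gam+kp}^{\d\OO})$, which has been established in the cited works \cite{LLRV24, LLRV25, LV18} exactly for the range $\gam \in ((1-\kappa)p-1, 2p-1) \setminus \{p-1\}$ and $\OO$ a bounded $C^{1,\kappa}$-domain — the value $\gam = p-1$ is excluded because it separates the two natural realisations of $-\del$, depending on whether the Dirichlet trace is defined on the operator's domain. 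Putting $u := \tilde u + v$ yields the desired solution, and the continuous dependence on $(f, g)$ follows from the norm inequalities for $\ext^{\d\OO}$ together with the maximal regularity estimate. Uniqueness is inherited from the uniqueness in the homogeneous problem applied to the difference of two solutions.

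The main technical obstacle is the consistency between the boundary datum and the initial datum of the extension $v$: one must arrange that $v|_{t=0}$ vanishes in the correct intersection-interpolation space, so that the reduction does not destroy the zero initial condition. A robust way to achieve this is to extend $g$ by zero to negative times (which is legitimate precisely under the stated compatibility condition $g|_{t=0}=0$, or is automatic when no temporal trace exists), apply the whole-line analogue of the trace extension from Theorem \ref{thm:intro_trace}, and then restrict to $\RR_+$; alternatively, one may subtract from $v$ an auxiliary correction supported near $t=0$ that itself lies in the solution space. Once this initial-condition compatibility is secured at the level of function spaces, the remainder of the argument is a routine combination of the trace theorem with the cited homogeneous well-posedness results.
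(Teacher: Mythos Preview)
Your proposal is correct and follows essentially the same approach as the paper's proof of the more general Theorem~\ref{thm:heateq_0T}: extend $g$ via $\ext^{\d\OO}$, reduce to the homogeneous boundary problem with right-hand side $\tilde f = f - (\d_t-\del)\ext^{\d\OO}g$, invoke maximal regularity from \cite[Corollary~6.7]{LLRV25}, and deduce uniqueness from the homogeneous case. Your suggested device for securing $v|_{t=0}=0$---extending $g$ by zero to negative times, applying the whole-line extension, and restricting---is precisely what the paper does (see the zero-extension argument at the start of the proof of Theorem~\ref{thm:heateq_0T} and Step~2 of Theorem~\ref{thm:spatialtracespace}).
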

Theorem \ref{thm:intro_heateq} is proved by combining maximal regularity for \eqref{eq:intro_heateq} with homogeneous boundary conditions (see \cite[Section 6.1]{LLRV25}) with the trace result from Theorem \ref{thm:intro_trace}. 
It generalises and unifies earlier results, including \cite[Section 7]{LV18} (for bounded $C^2$-domains and $k=0$) and \cite[Corollary 6.7]{LLRV25} (for homogeneous boundary conditions). The main novelties of Theorem \ref{thm:intro_heateq} are:
\begin{enumerate}[(i)]
\item \emph{Rough domains}: larger values of $\gamma$ allow for less regular domains. Furthermore, the required boundary regularity of $\OO$ is independent of $k$, unlike the unweighted theory, where typically $C^{k+2}$-domains are necessary for localisation arguments (see, e.g., \cite{DHP03, Ev10, KrBook08}). In particular, we note that in the unweighted setting $\gam=0$, Theorem \ref{thm:intro_heateq} allows for a bounded $C^{1,\kappa}$-domain with $\kappa> 1-\frac{1}{p}$. Even in the case $k=0$, this improves the standard $C^2$-condition on the domain which is usually imposed in the existing literature. The reason that we can allow for rougher domains is that we use a localisation procedure based on the Dahlberg--Kenig--Stein pullback similar to \cite{KimD07, LLRV25}.
\item \emph{Rough boundary data}: the boundary regularity required of $g$ is likewise independent of $k$. For large $\gamma$, one can treat a wider class of inhomogeneous boundary data than in the unweighted case, cf. \eqref{eq:intro_tracespace_unweighted}.
\end{enumerate}

Finally, using Theorem \ref{thm:intro_heateq}, well-posedness for the heat equation \eqref{eq:intro_heateq} with nonzero initial data can be obtained as well, see Remark \ref{rem:heateq}.

\subsection*{Outline} The outline of this paper is as follows. In Section \ref{sec:weightedspaces} we introduce the necessary function spaces and their properties. In Section \ref{sec:trace_ani} we first prove characterisations of higher-order trace spaces of anisotropic Triebel--Lizorkin spaces, which are used in Section \ref{sec:trace_RRdh} to obtain the trace spaces of intersections of weighted Sobolev spaces on the half-space. In Section \ref{sec:trace_dom} we apply a localisation procedure to transfer the trace results from the half-space to domains with minimal smoothness assumptions. Finally, in Section \ref{sec:heateq} we apply the trace results to prove well-posedness and higher-order regularity of the heat equation with inhomogeneous boundary conditions.

\subsection*{Notation}
We denote by $\NN_0$ and $\NN_1$ the set of natural numbers starting at $0$ and $1$, respectively. 
For $d\in\NN_1$ the half-space is given by $\RRdh=\RR_+\times\RR^{d-1}$, where $\RR_+=(0,\infty)$ and for $x\in \RRdh$ we write $x=(x_1,\tilde{x})$ with $x_1\in \RR_+$ and $\tilde{x}\in \RR^{d-1}$. 

For two topological vector spaces $X$ and $Y$, the space of continuous linear operators is denoted by $\mc{L}(X,Y)$ and $\mc{L}(X):=\mc{L}(X,X)$. Unless specified otherwise, $X$ will always denote a Banach space with norm $\|\cdot\|_X$ and the dual space is $X':=\mc{L}(X,\CC)$.

We write $f\lesssim g$ (resp. $f\gtrsim g$) if there exists a constant $C>0$, possibly depending on parameters which will be clear from the context or will be specified in the text, such that $f\leq Cg$ (resp. $f\geq Cg$). Furthermore, $f\eqsim g$ means $f\lesssim g$ and $g\lesssim  f$.\\

For an open and non-empty $\OO\subseteq \RR^d$ and $\ell\in\NN_0\cup\{\infty\}$, the space $C^\ell(\OO;X)$ denotes the space of $\ell$-times continuously differentiable functions from $\OO$ to some Banach space $X$. As usual, this space is equipped with the compact-open topology. In the case $\ell=0$ we write $C(\OO;X)$ for $C^0(\OO;X)$. Furthermore, we write $C^\ell_\b(\OO;X)$ for the space of all functions in $f\in C^\ell(\OO;X)$ such that $\d^{\alpha} f$ is bounded on $\OO$ for all multi-indices $\alpha\in \NN_0^d$ with $|\alpha|\leq \ell$. 

Let $\Cc^{\infty}(\OO;X)$ be the space of compactly supported smooth functions on $\OO$ equipped with its usual inductive limit topology. The space of $X$-valued distributions is given by $\mc{D}'(\OO;X):=\mc{L}(\Cc^{\infty}(\OO);X)$. Moreover, $\Cc^{\infty}(\overline{\OO};X)$ is the space of smooth functions with their support in a compact set contained in $\overline{\OO}$.

We denote the Schwartz space by $\SS(\RRd;X)$, and $\SS'(\RRd;X):=\mc{L}(\SS(\RRd);X)$ is the space of $X$-valued tempered distributions. For $f\in \SS(\RRd;X)$ we define the $d$-dimensional Fourier transform $(\mc{F} f)(\xi):=\hat{f}(\xi):=\int_{\RRd} f(x)e^{-\ii x\cdot\xi}\dd x$ for $\xi\in \RR^d$, which extends to $\SS'(\RRd;X)$ by duality.

Finally, for $\theta\in(0,1)$ and a compatible couple $(X,Y)$ of Banach spaces, the complex interpolation space is denoted by $[X,Y]_{\theta}$.

\section{Weighted function spaces}\label{sec:weightedspaces}
Let $\Dom$ be a domain in $\R^{d}$ with non-empty boundary $\BDom$. We call a locally integrable function $w:\OO\to (0,\infty)$ a \emph{weight}. In particular, we will deal with the following classes of weights.
\begin{enumerate}[(i)]
\item \emph{Muckenhoupt $A_p$ weights:} let $p\in(1,\infty)$ and let $\OO$ be $\RRd$ or $\RRdh$. A weight $w$ on $\OO$ belongs to the Muckenhoupt class $A_p(\OO)$ if 
\begin{equation*}
    [w]_{A_p(\OO)}:=\sup_{B}\Big(\frac{1}{|B|}\int_Bw(x)\dd x\Big)\Big(\frac{1}{|B|}\int_B w(x)^{-\frac{1}{p-1}}\dd x\Big)^{p-1}<\infty,
\end{equation*}
where the supremum is taken over all balls $B\subseteq \OO$. Moreover, $A_\infty(\OO)=\bigcup_{p>1}A_p(\OO)$. For a detailed study of Muckenhoupt weights and their properties, we refer to \cite[Chapter 7]{Gr14_classical_3rd}.
 \item \emph{Power weights:} for $\gam \in\RR$ define $w^{\d\OO}_{\gam}$ on $\OO$ by
\begin{equation*}
w^{\BDom}_{\gam}(x) := \dist(x,\d\OO)^{\gam}, \qquad x \in \Dom.
\end{equation*}
Furthermore, we write $w_{\gam}(x) := w_\gam^{\smash{\d\RRdh}}(x)=|x_1|^\gam$ for $x=(x_1, \tilde{x})\in \RRdh$. We note that the power weight $w_\gam(x)=|x_1|^\gam$ is in $A_p(\OO)$ if and only if $\gam\in (-1,p-1)$. \\
\end{enumerate}

For $p \in [1,\infty)$, $w$ a weight and $X$ a Banach space, we define the weighted Lebesgue space $L^p(\Dom,w;X)$ as the Bochner space consisting of all strongly measurable $f\colon \mc{O}\to X$ such that
\begin{equation*}
\|f\|_{L^p(\OO,w;X)} := \Big(\int_{\OO}\|f(x)\|_X^p\:w(x)\dd x \Big)^{1/p}<\infty.
\end{equation*}
Let $w$ be such that $w^{-\frac{1}{p-1}}\in L^1_{\loc}(\OO)$. The $k$-th order weighted Sobolev space for $k \in \N_0$ is defined as
\begin{equation*}
W^{k,p}(\Dom,w;X) := \left\{ f \in \mc{D}'(\Dom;X) : \forall |\alpha| \leq k, \partial^{\alpha}f \in L^p(\Dom,w;X) \right\}
\end{equation*}
equipped with the canonical norm. If $w\equiv 1$, then we simply write $W^{k,p}(\OO;X)$. Furthermore, if $X=\CC$, then we omit this from the notation as well.
\begin{remark}\label{rem:L1loc}
  The local $L^1$ condition for $w^{-\frac{1}{p-1}}$ ensures that all the derivatives $\d^{\alpha}f$ are locally integrable in $\OO$. If $\OO$ is the half-space $\RRdh$ or a bounded domain, then this condition for power weights $w_{\gam}^{\d\OO}$ holds for all $\gam\in\RR$. For $\OO=\RR^d$ the local $L^1$ condition holds only for weights $w_\gam(x)= |x_1|^\gam$ with $\gam\in(-\infty,p-1)$ since functions might not be locally integrable near $x_1=0$, see also \cite[Example~1.7]{KO1984}. This explains why, for example, we cannot employ classical reflection arguments from $\RRdh$ to $\RR^d$ if $\gam>p-1$. 
\end{remark}

Let $p\in(1,\infty)$, $s\in\RR$, $w\in A_p(\RRd)$ and let $X$ be a Banach space. For $f\in \SS'(\RRd;X)$ let $J_s f  = (1-\del)^{s/2}f = \mc{F}^{-1}( (1+|\cdot|^2)^{s/2}\mc{F}f)$ be the \emph{Bessel potential operator}. We define the \emph{weighted Bessel potential space} as the space of all $f\in \SS'(\RRd;X)$ such that
\begin{equation*}
    \|f\|_{H^{s,p}(\RRd,w;X)}:=\|J_s f\|_{L^p(\RRd, w;X)}<\infty.
\end{equation*}

Let $\Phi(\RRd)$ be the set of all \emph{inhomogeneous Littlewood--Paley sequences}, see \cite[Section 14.2.c]{HNVW24}.
For $(\ph_n)_{n\geq 1}\in \Phi(\RRd)$, $p\in (1,\infty)$, $q\in[1,\infty], s\in\RR, w\in A_{\infty}(\RRd)$ and $X$ a Banach space, we define the \emph{weighted Besov and Triebel--Lizorkin spaces} as the spaces of all $f\in \mc{S}'(\RRd;X)$ for which, respectively, 
\begin{align*}
  \|f\|_{B^s_{p,q}(\RRd,w;X)}&:=\|(2^{n s}\ph_n\ast f)_{n\geq 0}\|_{\ell^q(L^p(\RRd, w;X))}<\infty,\\
  \|f\|_{F^s_{p,q}(\RRd,w;X)}&:=\|(2^{n s}\ph_n\ast f)_{n\geq 0}\|_{L^p(\RRd, w;\ell^q(X))}<\infty.
\end{align*}
The definitions of the Besov and Triebel--Lizorkin spaces are independent of the chosen Littlewood--Paley sequence up to an equivalent norm, see \cite[Proposition 3.4]{MV12}. We note that all Besov, Triebel--Lizorkin and Bessel potential spaces embed continuously into $\SS'(\RRd;X)$, see \cite[Section 5.2.1e]{Li14}. Furthermore, $\SS(\RR^d;X)$ embeds continuously into all these spaces and this embedding is dense if $p,q<\infty$, see \cite[Lemma 3.8]{MV12}.

Let $\OO\subseteq \RR^d$ be open. Let $\F\in\{B,F,H\}$, then we define the \emph{restriction/factor space}
\begin{equation*}
  \F(\OO,w;X) := \big\{f\in \mc{D}'(\OO;X): \exists g\in \F(\RRd,w;X), g|_{\OO}=f \big\}
\end{equation*}
endowed with the norm
\begin{equation*}
  \|f\|_{\F(\OO,w;X)} := \inf\{\|g\|_{\F(\RRd,w;X)}:g|_{\OO}=f \}.
\end{equation*}
Similarly, we can define the factor space for weighted Sobolev spaces. If $\OO=\RRdh$ and $w\in A_p(\RRdh)$, then the weighted Sobolev spaces $W^{k,p}(\RRdh, w;X)$ as defined above coincide with their factor spaces, see \cite[Section 5]{LMV17}. The proof is based on the existence of an extension operator (in the sense of \cite[Definition 5.2]{LMV17}) and can also be extended to sufficiently smooth domains, see \cite[Theorem 5.22]{AF03}.

\subsection{Anisotropic function spaces} We introduce anisotropic function spaces, where we mainly focus on Triebel--Lizorkin, Bessel potential and Sobolev spaces. For more details on anisotropic spaces, we refer to \cite{Am09, JS08, Li14}.\\

Let $\ell\in\NN_1$ and $\pzcd=(\pzcd_1,\dots, \pzcd_\ell)\in \NN_1^{\ell}$ such that $d:=|\pzcd|=\sum_{j=1}^\ell \pzcd_j$. The \emph{$\pzcd$-decomposition of $\RRd$} is given by
\begin{equation*}
  \RRd = \RR^{\pzcd_1}\times \cdots \times \RR^{\pzcd_\ell},
\end{equation*}
and for $x\in\RRd$ we write $x=(x_1,\dots, x_{\ell})$ and $x_j=(x_{j,1},\dots, x_{j, \pzcd_j})$ where $x_j\in \RR^{\pzcd_j}$ and $x_{j,i}\in \RR$ for $j\in\{1,\dots, \ell\}$ and $i\in\{1,\dots, \pzcd_j\}$.

Let $\vec{p}=(p_1,\dots, p_\ell)\in [1,\infty)^{\ell}$ and $\vec{w}=(w_1,\dots, w_{\ell})$, where $w_j$ is a weight on $\RR^{\pzcd_j}$ for $j\in \{1,\dots, \ell\}$. Furthermore, let $X$ be a Banach space. The \emph{weighted mixed-norm Lebesgue space} $L^{\vec{p},\pzcd}(\RRd,\vec{w};X)$ is the Bochner space consisting of all strongly measurable $f:\RRd\to X$ such that
\begin{equation*}
  \|f\|_{L^{\vec{p},\pzcd}(\RRd,\vec{w};X)}:=\Big(\int_{\RR^{\pzcd_\ell}}\cdots\Big(\int_{\RR^{\pzcd_1}}\|f(x)\|_X^{p_1}w_{1}(x_1) \dd x_1\Big)^{\frac{p_2}{p_1}}\cdots w_{\ell}(x_{\ell})\dd x_{\ell}\Big)^{\frac{1}{p_\ell}}<\infty.
\end{equation*}

To introduce anisotropic function spaces that measure smoothness as well, we first define the following concepts. Let $\RRd$ be $\pzcd$-decomposed and $\vec{a}\in (0,\infty)^\ell$. For $\lambda>0$ we define the \emph{$(\pzcd,\vec{a})$-anisotropic dilation} $\delta_\lambda^{(\pzcd, \vec{a})}:=(\lambda^{a_1}x_1,\dots, \lambda^{a_\ell}x_{\ell})$ for $x\in \RRd$ and the \emph{$(\pzcd,\vec{a})$-anisotropic distance function}
\begin{equation*}
  |x|_{\pzcd, \vec{a}}:=\Big(\sum_{j=1}^\ell |x_j|^{\frac{2}{a_j}}\Big)^\half,\qquad x\in \RRd.
\end{equation*}

In addition, let $0<A<B<\infty$ and define the set of \emph{anisotropic Littlewood--Paley sequences} $\Phi_{A,B}^{\pzcd, \vec{a}}(\RRd)$ as the set of all $(\ph_n)_{n\geq 0}\subseteq \SS(\RRd)$ such that $\hat{\ph}_0=\hat{\ph}$ and 
\begin{equation*}
  \hat{\ph}_n(\xi) = \hat{\ph}\big(\delta_{2^{-n}}^{(\pzcd, \vec{a})}\xi\big)-\hat{\ph}\big(\delta_{2^{-n+1}}^{(\pzcd, \vec{a})}\xi\big),\qquad \xi\in\RRd, \,n\in \NN_1,
\end{equation*}
where the generating function $\ph\in  \SS(\RRd)$ satisfies $0\leq \hat{\ph}(\xi)\leq 1$ for $\xi\in\RRd$, $\hat{\ph}(\xi)=1$ if $|\xi|_{\pzcd, \vec{a}}\leq A$ and $\hat{\ph}(\xi)=0$ if $|\xi|_{\pzcd, \vec{a}}\geq B$. Furthermore, we define $\Phi^{\pzcd, \vec{a}}(\RR^d):=\bigcup_{0<A<B<\infty} \Phi_{A,B}^{\pzcd, \vec{a}}(\RRd)$.\\

Let $\vec{p}\in [1,\infty)^\ell$, $\vec{a}\in(0,\infty)^\ell$, $q\in[1,\infty]$, $s\in \RR$, $\vec{w}\in \prod_{j=1}^\ell A_\infty(\RR^{\pzcd_j})$ and let $X$ be a Banach space. Let $\ph\in \Phi^{\pzcd,\vec{a}}(\RRd)$ and let $(S_n)_{n\geq 0}\subseteq \mc{L}(\SS'(\RRd;X))$ be the associated family of convolution operators given by $S_nf:=\ph_n \ast f$. The \emph{weighted anisotropic Triebel--Lizorkin space $F^{s,\vec{a}}_{\vec{p},q}(\RRd, \vec{w};X)$} is defined as the space of all $f\in \SS'(\RRd;X)$ such that
\begin{equation*}
  \|f\|_{F^{s,\vec{a}}_{\vec{p},q}(\RRd, \vec{w};X)}:= \|(2^{ns} S_nf)_{n\geq 0}\|_{L^{\vec{p},\pzcd}(\RRd,\vec{w};\ell^q(X))}<\infty.
\end{equation*} 
We note that this definition of the Triebel--Lizorkin space is independent of the choice of $\ph\in \Phi^{\pzcd, \vec{a}}(\RRd)$. \\

Next, we define anisotropic Bessel potential and Sobolev spaces. Let $\RRd$ be $\pzcd$-decomposed as above, then for $j\in\{1,\dots, \ell\}$ we define the inclusion and projection maps
\begin{align*}
  \iota_{[\pzcd; j]}&:\RR^{\pzcd_j}\to \RR^d,\qquad x_j\mapsto (0,\dots, 0, x_j, 0,\dots, 0),\\
  \pi_{[\pzcd; j]}&: \RRd\to \RR^{\pzcd_j},\qquad (x_1, \dots, x_\ell)\mapsto x_j.
\end{align*}
For $\sigma\in \RR$ we define the operator $\mc{J}_{\sigma}^{[\pzcd; j]}\in \mc{L}(\SS'(\RRd;X))$ by
\begin{equation*}
  \mc{J}_{\sigma}^{[\pzcd; j]} f: =\mc{F}^{-1}\big((1+|\pi_{[\pzcd;j]}\cdot|^2)^{\frac{\sigma}{2}}\mc{F} f\big).
\end{equation*}
Furthermore, for $\vec{n}=(n_1,\dots, n_\ell)\in \NN_0^\ell$ let $J_{\vec{n},\pzcd}$ be the set of multi-indices given by
\begin{equation*}
  J_{\vec{n},\pzcd}:=\Big\{\alpha \in \bigcup_{j=1}^\ell \iota_{[\pzcd; j]}\NN_0^{\pzcd_j}: |\alpha_j|\leq n_j\Big\}.
\end{equation*}
Let $\vec{p}\in (1,\infty)^\ell$, $\vec{s}\in (0,\infty)^\ell$, $\vec{n}\in \NN_0^\ell$, $\vec{w}\in \prod_{j=1}^\ell A_{p_j}(\RR^{\pzcd_j})$ and let $X$ be a Banach space. Then we define the \emph{weighted anisotropic Bessel potential and Sobolev spaces} as
\begin{align*}
  W^{\vec{n},\vec{p}}_{\pzcd}(\RRd, \vec{w};X) & := \big\{f\in \SS'(\RRd;X): \d^\alpha f\in L^{\vec{p},\pzcd}(\RRd, \vec{w};X),\, \alpha\in J_{\vec{n}, \pzcd} \big\}, \\
 H^{\vec{s},\vec{p}}_{\pzcd}(\RRd, \vec{w};X)  & :=\big\{f\in \SS'(\RRd;X): \mc{J}_{s_j}^{[\pzcd;j]} f\in L^{\vec{p},\pzcd}(\RRd, \vec{w};X),\, j\in\{1,\dots, \ell\}\big\},
\end{align*}
endowed with the norms
\begin{align*}
  \|f\|_{W^{\vec{n},\vec{p}}_{\pzcd}(\RRd, \vec{w};X)} & := \sum_{\alpha\in J_{\vec{n},\pzcd}}\|\d^\alpha f\|_{L^{\vec{p},\pzcd}(\RRd, \vec{w};X)}, \\
  \|f\|_{H^{\vec{s},\vec{p}}_{\pzcd}(\RRd, \vec{w};X)} & := \sum_{j=1}^\ell\|\mc{J}_{s_j}^{[\pzcd;j]} f\|_{L^{\vec{p},\pzcd}(\RRd, \vec{w};X)}.
\end{align*}

\section{Traces of anisotropic Triebel--Lizorkin spaces}\label{sec:trace_ani}
Similar to \cite{JS08, Li14, Li20}, we introduce two notions for the trace at the hyperplane $\{0\}\times \RR^{d-1}$: the distributional trace and the so-called working definition of the trace. \\

\textit{The working definition of the trace.} Let $\ph\in \Phi^{\pzcd, \vec{a}}(\RR^d)$ and let $(S_n)_{n\geq 0}\subseteq \mc{L}(\SS'(\RR^d;X))$ be the associated family of convolution operators given by $S_nf:=\ph_n \ast f$. Let $\mathbb{S}_d\in\{\SS'(\RR^d;X), \SS(\RR^d;X)\}$, then for $f\in \mathbb{S}_d$ it holds that $f=\sum_{n=0}^\infty S_n f$ in $\mathbb{S}_d$ and thus
\begin{equation*}
  f|_{\{0\}\times \RR^{d-1}}=\sum_{n=0}^\infty(S_nf)|_{\{0\}\times \RR^{d-1}} \, \text{ in }\mathbb{S}_{d-1}.
\end{equation*}
By the Paley--Wiener--Schwartz theorem, $S_nf$ is smooth for every $f\in \SS'(\RRd;X)$ and $n\in\NN_0$ and therefore has a well-defined classical trace. We define the trace operator $\Tr: \SS'(\RRd;X)\to \SS'(\RR^{d-1};X)$ by
\begin{equation*}
  \Tr f := \sum_{n=0}^\infty (S_n f)|_{\{0\}\times \RR^{d-1}},
\end{equation*}
whenever this series converges in $\SS'(\RR^{d-1};X)$. \\

\textit{The distributional trace.} Recall that by the Schwartz kernel theorem we have the canonical identification $\D'(\RR\times \RR^{d-1};X)=\D'(\RR; \D'(\RR^{d-1};X))$. Hence, we have
\begin{equation*}
  C(\RR; \D'(\RR^{d-1};X))  \hookrightarrow \D'(\RRd; X)=\D'(\RR\times \RR^{d-1};X)=\D'(\RR; \D'(\RR^{d-1};X)),
\end{equation*}
and we define the bounded evaluation map 
\begin{equation*}
  r: C(\RR; \D'(\RR^{d-1};X)) \to \D'(\RR^{d-1};X),\qquad f\mapsto \operatorname{ev}_0 f,
\end{equation*}
where $\operatorname{ev}_0 f:= f(0)$ denotes the evaluation at $0$.
Since 
\begin{equation*}
  C(\RR^d;X) = C(\RR\times \RR^{d-1};X)= C(\RR; C(\RR^{d-1};X))\hookrightarrow C(\RR; \D'(\RR^{d-1};X)),
\end{equation*}
the classical trace and the distributional trace $r$ coincide on $C(\RR^{d};X)$, i.e.,
\begin{equation*}
  r: C(\RR^d;X) \to C(\RR^{d-1};X),\qquad f\mapsto f|_{\{0\}\times \RR^{d-1}}.
\end{equation*}

\begin{lemma}\label{lem:ext_op_schwartz}
  Let $\ell\in\NN_1$, $\pzcd =(1,\pzcd'') \in\NN_1\times  \NN_1^{\ell-1}$, $\vec{a}=(a_1, \vec{a}'')\in (0,\infty)\times (0,\infty)^{\ell-1}$. Let $\rho\in \SS(\RR)$ with $\rho(0)=1$ and $\supp\hat{\rho}\subseteq [1,2]$ and let $(\phi_n)_{n\geq 0}\in\Phi ^{\pzcd'', \vec{a}''}(\RR^{d-1})$. Then 
  \begin{equation*}
    \ext g(x_1, \tilde{x}):= \sum_{n=0}^\infty \rho (2^{na_1} x_1)  (\phi_n\ast g)(\tilde{x}),\qquad g\in \SS'(\RR^{d-1};X), 
  \end{equation*}
  is a convergent series in $\SS'(\RRd;X)$. Moreover, $$\ext: \SS'(\RR^{d-1};X)\to \SS'(\RR^{d};X)$$ is linear and continuous and 
  $$\ext: \SS'(\RR^{d-1};X)\to C_{\b}(\RR;\SS'(\RR^{d-1};X))$$
  acts as a right inverse of $r: C(\RR; \SS'(\RR^{d-1};X))\to \SS'(\RR^{d-1};X)$.
\end{lemma}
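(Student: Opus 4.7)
The plan is to test against Schwartz functions, reshape each term as a dual pairing of the form $\langle g, \check{\phi}_n \ast \Psi_n\rangle$, and combine the anisotropic spectral support of $\hat\phi_n$ with the Schwartz decay of $\hat\Psi_n$ to produce rapid decay in $n$, from which all three claims will follow.

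For the convergence of the defining series in $\SS'(\RR^d;X)$ and the continuity of $\ext$, we first fix $\psi \in \SS(\RR^d)$ and set
\[
\Psi_n(\tilde x) := \int_\RR \rho(2^{na_1}x_1)\,\psi(x_1,\tilde x)\dd x_1.
\]
The rescaling $y = 2^{na_1}x_1$ immediately shows $\Psi_n \in \SS(\RR^{d-1})$ and that every Schwartz seminorm of $\Psi_n$ is controlled by $C\,2^{-na_1}$ times some Schwartz seminorm of $\psi$. Fubini then yields
\[
\langle \rho(2^{na_1}\cdot)(\phi_n \ast g),\,\psi\rangle = \langle g,\,\check{\phi}_n \ast \Psi_n\rangle,\qquad \check{\phi}_n(\tilde x) := \phi_n(-\tilde x).
\]
The core estimate to prove is that, for every $N \in \NN_0$ and every Schwartz seminorm $p$,
\[
p(\check{\phi}_n \ast \Psi_n) \leq C_N\,2^{-nN}\qquad (n \geq 1),
\]
with $C_N$ depending on finitely many Schwartz seminorms of $\psi$. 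Indeed, for $n \geq 1$, $\hat\phi_n$ is supported in the anisotropic annulus $\{A\,2^{n-1} \leq |\cdot|_{\pzcd'',\vec{a}''} \leq B\,2^n\}$, on which the Euclidean norm grows at least like $2^{n\alpha}$ for some $\alpha > 0$ depending on $\vec{a}''$, so the Schwartz decay of $\hat{\Psi}_n$ converts into rapid decay in $n$ of $\widehat{\check{\phi}_n \ast \Psi_n}$ together with all its derivatives. Since $g$ is of finite order on $\SS(\RR^{d-1})$, summation in $n$ gives convergence of the series in $\SS'(\RR^d;X)$, and tracking the seminorm dependence throughout yields continuity of $\ext$.

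For the right-inverse property we fix $\psi \in \SS(\RR^{d-1})$ and observe that the $n$-th term of the series at $x_1 \in \RR$ is $\rho(2^{na_1}x_1)\,\langle g, \check{\phi}_n \ast \psi\rangle$, which is continuous in $x_1$ and bounded in absolute value by $\|\rho\|_\infty\,|\langle g, \check{\phi}_n \ast \psi\rangle|$. The spectral argument above applied with $\psi$ in place of $\Psi_n$ gives $|\langle g,\check{\phi}_n \ast \psi\rangle| \lesssim_N 2^{-nN}$, so the partial sums converge \emph{uniformly in $x_1 \in \RR$}. This puts $\ext g$ in $C_\b(\RR;\SS'(\RR^{d-1};X))$, and since $\rho(0)=1$, evaluation at $x_1=0$ collapses the series to $\sum_{n\ge0}\phi_n \ast g = g$ via the inhomogeneous anisotropic Littlewood--Paley decomposition, yielding $r \circ \ext = \id$.

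The main obstacle is the spectral estimate $p(\check{\phi}_n \ast \Psi_n) \lesssim_N 2^{-nN}$: one has to convert the \emph{anisotropic} annular support of $\hat\phi_n$ into \emph{Euclidean} growth on which the Schwartz bounds for $\hat\Psi_n$ bite, and carry all constants through in a way compatible with the Schwartz seminorms of $\psi$ and the finite order of $g$. Once this estimate is secured, all three assertions follow by routine Littlewood--Paley synthesis.
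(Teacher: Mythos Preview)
Your argument is correct and follows essentially the same route as the paper's: both reduce the action of $\ext g$ on a test function $\psi\in\SS(\RR^d)$ to $g$ acting on a Schwartz function $K\psi=\sum_n \check{\phi}_n\ast\Psi_n\in\SS(\RR^{d-1})$, with the spectral localisation of $\hat{\phi}_n$ in an anisotropic annulus providing the rapid decay needed for convergence. The paper obtains the convergence, $C_\b$-regularity and right-inverse property by citing \cite[Proposition~5.2.49]{Li14} and \cite[Section~4.2]{JS08}, and only spells out the continuity of $\ext$ via the transpose operator $K$ from \cite[Equation~(4.18)]{JS08}; you have supplied the full details of that same argument.
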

\begin{proof}
All statements except the continuity of $\ext
\colon\SS'(\RR^{d-1};X)\to \SS'(\RR^{d};X)$ are shown in \cite[Proposition~5.2.49]{Li14}, which generalises the scalar case in \cite[Section~4.2]{JS08}. To show the continuity, we use the linear operator $K\colon \SS(\RR^d)\to \SS(\RR^{d-1})$ from \cite[Equation (4.18)]{JS08}, defined by
\[ (K\varphi)(\tilde{x}) := \sum_{n=0}^\infty \int_{\RR}\rho(2^{na_1}x_1) \Big(\int_{\RR^{d-1}} (\tilde{\mathcal{F}}^{-1} \phi_n)(\tilde y)\varphi(x_1,\tilde x-\tilde y)
\dd \tilde y\Big) \dd x_1.\]
Here, $\tilde{\mathcal F}$ stands for the Fourier transform in $\RR^{d-1}$. It is shown in 
\cite[Equation (4.19)]{JS08} that in fact $K\varphi\in 
\SS(\R^{d-1})$ for each $\varphi\in \SS(\RR^d)$ and that
$(\ext g)(\overline\varphi) = g(\overline{K\varphi}) $ for all $g\in\SS'(\RR^{d-1};X)$ and $\varphi\in \SS(\RR^d)$. By the definition of the locally convex
topology on $\SS'(\RR^{d-1};X)$, this shows that  $\ext
\colon\SS'(\RR^{d-1};X)\to \SS'(\RR^{d};X)$ is 
continuous.
\end{proof}

For $m\in\NN_0$ we define the \emph{$m$-th order trace operator} as $\Tr_m:= \Tr\circ \d_1^m$. Furthermore, we define the vector of traces
\begin{equation*}
  \overline{\Tr}_m:=(\Tr_0, \dots, \Tr_m).
\end{equation*}
Moreover, to be consistent with the notation in \cite{JS08,Li14, Li20} we write $\vec{v}=(v_1, \vec{v}'')\in \RR \times \RR^{\ell-1}$ for vectors of length $\ell\in \NN_1$.

\subsection{Trace theory for anisotropic spaces}\label{subsec:ani_trace}
 
We now characterise higher-order trace spaces of weighted anisotropic Triebel--Lizorkin spaces, extending the results in \cite{JS08} (unweighted scalar-valued case) and \cite{Li14, Li20} (weighted vector-valued case for the zeroth-order trace). 
Trace theory for (unweighted) mixed norm spaces was developed in \cite{Ber84, Ber85, Ber87I, Ber87II, Bu71}. In particular, trace theory for anisotropic Triebel--Lizorkin spaces is contained in \cite{FJS00, JS08} and \cite{HL22, Li14, Li20} for the unweighted and weighted case, respectively.\\

By combining the arguments in \cite[Corollary 2.7]{JS08} and \cite[Theorem 4.6]{Li20}, we obtain the characterisation of the higher-order trace space of anisotropic Triebel--Lizorkin spaces. 
We note that a version of the theorem below for $m=0$ and more general weights is contained in \cite[Proposition 3.7]{HL22}.
\begin{theorem}\label{thm:trm_ani_TL}
  Let $\ell\in \NN_1$, $\pzcd_1=1$, $\vec{p}\in (1,\infty)^{\ell}$, $q\in [1,\infty]$, $\vec{a}\in (0,\infty)^{\ell}$, $\gam\in (-1,\infty)$, $m\in \NN_0$ and let $X$ be a Banach space. Let $w_{\gam}(x_1)=|x_1|^\gam$, $\tilde{w}_j\in A_{p_j}(\RR^{\pzcd_j})$ for $j\in\{2,\dots, \ell\}$ and let $\vec{w}=(w_{\gam}, \tilde{w}_2,\dots, \tilde{w}_{\ell})$. Then, for $s> a_1(m+\frac{\gam+1}{p_1})$, the trace operator 
  \begin{equation*}
    \overline{\Tr}_m: F^{s, \vec{a}}_{\vec{p}, q, \pzcd}(\RRd, (w_\gam, \tilde{\vec{w}}'');X)\to \prod_{j=0}^m F^{s-a_1(j+\frac{\gam+1}{p_1}), \vec{a}''}_{\vec{p}'', p_1, \pzcd''}(\RR^{d-1}, \tilde{\vec{w}}'';X)
  \end{equation*}
  is continuous and surjective. Moreover, there exists a continuous right inverse $\overline{\ext}_m$ of $\overline{\Tr}_m$.
  \end{theorem}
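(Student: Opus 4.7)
\emph{Strategy.} I would bootstrap from the zeroth-order trace theorem proved in \cite[Theorem~4.6]{Li20} (see also \cite[Proposition~3.7]{HL22}): for $m=0$ the operator $\Tr_0$ is continuous and surjective with a continuous right inverse of the Paley--Littlewood form given by Lemma~\ref{lem:ext_op_schwartz}. The higher-order statement then follows from (i) a lifting argument for continuity and (ii) a polynomial modification of the Paley--Littlewood extension combined with a triangular correction for surjectivity, mirroring \cite[Corollary~2.7]{JS08}.

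\emph{Continuity.} Factor $\Tr_j = \Tr_0 \circ \d_1^j$ for each $j \in \{0,\dots,m\}$. Since $\pzcd_1 = 1$, the Fourier symbol of $\d_1$ is $(\pzcd,\vec a)$-anisotropically homogeneous of degree $a_1$, so by the anisotropic Mikhlin multiplier theorem in the weighted Triebel--Lizorkin scale (see, e.g.\ \cite[Section~5.2]{Li14}) one has the lifting $\d_1^j \in \mc{L}(F^{s,\vec a}_{\vec p, q, \pzcd}(\RR^d, \vec w; X), F^{s - a_1 j, \vec a}_{\vec p, q, \pzcd}(\RR^d, \vec w; X))$, where $\vec w := (w_\gam, \tilde{\vec w}'')$. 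The hypothesis $s > a_1(m + \tfrac{\gam+1}{p_1})$ forces $s - a_1 j > a_1 \tfrac{\gam+1}{p_1}$ for every $j \le m$, so the zeroth-order trace theorem applies and delivers continuity of the $j$-th component of $\overline{\Tr}_m$.

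\emph{Right inverse.} Fix $\rho \in \SS(\RR)$ with $\rho(0) = 1$ and $\supp \hat\rho \subseteq [1,2]$, and $(\phi_n)_{n \ge 0} \in \Phi^{\pzcd'', \vec a''}(\RR^{d-1})$. For $j \in \{0,\dots,m\}$ set $\rho_j(t) := t^j \rho(t)/j!$ and define
\[ \ext_{(j)} g(x_1, \tilde x) := \frac{x_1^j}{j!} \sum_{n=0}^\infty \rho(2^{n a_1} x_1) (\phi_n \ast g)(\tilde x) = \sum_{n=0}^\infty 2^{-n a_1 j} \rho_j(2^{n a_1} x_1) (\phi_n \ast g)(\tilde x). \]
A Leibniz computation using $\rho_j^{(k)}(0) = \delta_{jk}$ for $k \le j$ yields $\Tr_k \ext_{(j)} g = 0$ for $k < j$ and $\Tr_j \ext_{(j)} g = g$, while for $k > j$ one gets $\Tr_k \ext_{(j)} g = \rho_j^{(k)}(0) \sum_{n\ge 0} 2^{na_1(k-j)} (\phi_n \ast g)$, which realises a shift of smoothness by $-a_1(k-j)$ in the Triebel--Lizorkin scale. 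The boundedness
\[ \ext_{(j)} : F^{s - a_1(j + \frac{\gam+1}{p_1}), \vec a''}_{\vec p'', p_1, \pzcd''}(\RR^{d-1}, \tilde{\vec w}''; X) \to F^{s, \vec a}_{\vec p, q, \pzcd}(\RR^d, (w_\gam, \tilde{\vec w}''); X) \]
is obtained by adapting the proof of \cite[Theorem~4.6]{Li20} with $\rho$ replaced by the Schwartz function $\rho_j$; the scaling $2^{-n a_1 j}$ absorbs the polynomial growth of $\rho_j$. A recursive correction $\tilde g_0 := g_0$, $\tilde g_j := g_j - \sum_{i < j} \Tr_j \ext_{(i)} \tilde g_i$ for $j \ge 1$ is well-defined (by the smoothing estimates each $\Tr_j \ext_{(i)}$ maps the $i$-th target space continuously into the $j$-th), and is a continuous isomorphism of the product target space; setting $\overline{\ext}_m(\vec g) := \sum_{j=0}^m \ext_{(j)} \tilde g_j$ gives the sought continuous right inverse and establishes surjectivity of $\overline{\Tr}_m$.

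\emph{Main obstacle.} The algebraic bookkeeping of the triangular correction is harmless; the genuine technical difficulty lies in the uniform boundedness of each $\ext_{(j)}$ in the weighted, vector-valued, anisotropic Triebel--Lizorkin setting. The polynomial factor $x_1^j$ interacts nontrivially with both the power weight $w_\gam$ and the Fefferman--Stein vector-valued maximal function appearing in the proof of \cite[Theorem~4.6]{Li20}, and one must verify that the compensating scaling $2^{-n a_1 j}$ works uniformly in the dyadic index $n$. This is a tedious but essentially direct adaptation of the $m=0$ argument, and is where the bulk of the work lies.
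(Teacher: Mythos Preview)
Your proposal is correct and follows the same overall strategy as the paper: bootstrap from the $m=0$ case in \cite[Theorem~4.6]{Li20}, use the lifting $\partial_1^j\in\mc{L}(F^{s,\vec a}_{\vec p,q,\pzcd},F^{s-a_1j,\vec a}_{\vec p,q,\pzcd})$ for continuity, and build the extension from the Paley--Littlewood blocks $\ext_{(j)}$ with $\rho_j(t)=t^j\rho(t)/j!$, verifying their boundedness by rerunning part~(II) of the proof of \cite[Theorem~4.6]{Li20} with the extra factor $2^{-na_1j}$.

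The one genuine difference is in how you handle the off-diagonal traces. You allow arbitrary $\rho$ with $\rho(0)=1$, obtain only the lower-triangular relations $\Tr_k\ext_{(j)}=0$ for $k<j$ and $\Tr_j\ext_{(j)}=\id$, and then invert the resulting triangular system by the recursion $\tilde g_j=g_j-\sum_{i<j}\Tr_j\ext_{(i)}\tilde g_i$. The paper instead imposes the additional normalisation $\rho'(0)=\cdots=\rho^{(m)}(0)=0$ (which is compatible with $\supp\hat\rho\subseteq[1,2]$ and $\rho(0)=1$); this forces $\rho_j^{(k)}(0)=\binom{k}{j}\rho^{(k-j)}(0)=\delta_{jk}$ for \emph{all} $k\le m$, so that $\Tr_k\ext_j=\delta_{kj}\,\id$ outright and one may simply take $\overline{\ext}_m g=\sum_j\ext_j g_j$ without any correction. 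Your triangular inversion is perfectly valid---the multiplier $\sum_n 2^{na_1(k-j)}\hat\phi_n$ is indeed an anisotropic lift of order $a_1(k-j)$ on the target scale---but it is avoidable, and the extra moment condition on $\rho$ is the cleaner route.
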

  \begin{proof}
    The case $m=0$ is contained in \cite[Theorem 4.6]{Li20} and the formula for the zeroth-order extension operator as given in Lemma \ref{lem:ext_op_schwartz}, restricts to a coretraction corresponding to $\Tr$ on $F^{s,\vec{a}}_{\vec{p}, q,\pzcd}(\RRd, (w_\gam, \tilde{\vec{w}}'');X)$. Now, let $m\in\NN_1$ and $j\in\{0,\dots, m\}$. Note that $\d_1^j$ has order $ja_1$ in the scale of anisotropic Triebel--Lizorkin spaces, see for instance \cite[Proposition 5.2.29]{Li14}. Hence, the continuity for $\Tr_j$ and $\overline{\Tr}_m$ follows from the case $m=0$. 
    
    It remains to consider the extension operator, and we argue similarly as in \cite[Corollary 2.7 \& Section 4.3]{JS08}. Let $(\phi_n)_{n\geq 0}\in \Phi^{\pzcd'', \vec{a}''}(\RR^{d-1})$ and $\rho\in \SS(\RR)$ with $\supp \hat{\rho}\subseteq [1,2]$, $\rho(0)=1$ and $\rho'(0)=\dots= \rho^{(m)}(0)= 0$. Furthermore, let $\rho_j(x_1):= x_1^j \rho(x_1)/ j!$ and  define the higher-order extension operator
    \begin{equation*}
      \ext_j g(x_1, \tilde{x}):=\sum_{n=0}^\infty 2^{-j n  a_1}\rho_j(2^{na_1}x_1) (\phi_n\ast g)(\tilde{x}),\qquad g\in \mc{S}'(\RR^{d-1};X). 
    \end{equation*}
   Due to the particular choice of the $\rho_j$, Lemma \ref{lem:ext_op_schwartz} implies that the series converges in $\mc{S}'(\RR^{d-1};X)$ and that $\ext_j g$ is in $C_\b(\RR; \SS'(\RR^{d-1};X))$ and is continuous as a mapping from $\SS'(\RR^{d-1};X)$ to $\SS'(\RR^{d};X)$. Note that by definition we have for $j_1\leq m$ that $\Tr_{j_1}\rho_j = \delta_{j_1  j}$ where $\delta_{j_1 j}$ is the Kronecker delta. Since $\d_1^{j_1}$ is continuous on $\SS'(\RR^d;X)$ it follows that
   \begin{equation}\label{eq:rightinverse}
     \Tr_{j_1} \ext_j g = \delta_{j_1 j}g, \qquad g\in \mc{S}'(\RR^{d-1};X),\, j_1, j\in \{0,\dots, m\}.
   \end{equation}
     Taking into account the additional power of 2 in the definition of $\ext_j$ (compared to the zeroth-order extension operator), we can redo part (II) of the proof of \cite[Theorem 4.6]{Li20} to obtain that
    \begin{equation*}
      \ext_j: F^{s-a_1(j+\frac{\gam+1}{p_1}), \vec{a}''}_{\vec{p}'', p_1, \pzcd''}(\RR^{d-1}, \tilde{\vec{w}}'';X) \to F^{s, \vec{a}}_{\vec{p}, q, \pzcd}(\RRd, (w_\gam, \tilde{\vec{w}}'');X)
    \end{equation*}
    is continuous.
    Finally, defining
    \begin{equation*}
      \overline{\ext}_m g:= \sum_{j=0}^m \ext_j g_j,\qquad g= (g_0,\dots, g_m)\in \SS'(\RRd;X)^{m+1},
    \end{equation*}
     gives the desired extension operator to $\overline{\Tr}_m$. Indeed, the continuity follows from the continuity properties of $\ext_j$ and $\overline{\Tr}_m \circ \overline{\ext}_m = \id$ follows from \eqref{eq:rightinverse}. 
  \end{proof}

The trace spaces of the weighted anisotropic Triebel--Lizorkin spaces are \emph{independent} of the microscopic parameter $q\in[1,\infty]$. As a corollary, we also obtain the trace space for any normed space $\F$ that satisfies
  \begin{equation*}
    F^{s, \vec{a}}_{\vec{p}, 1, \pzcd}(\RRd, (w_\gam, \tilde{\vec{w}}'');X)\hookrightarrow \F\hookrightarrow F^{s, \vec{a}}_{\vec{p}, \infty, \pzcd}(\RRd, (w_\gam, \tilde{\vec{w}}'');X).
  \end{equation*}
In view of these embeddings (see also \cite[Equation (16)]{Li20}), we obtain the following corollary for the spaces
\begin{equation}\label{eq:ani_WH}
  \begin{aligned}
  \F:= & W^{\vec{n}, \vec{p}}_{\pzcd}(\RRd, (w_\gam, \tilde{\vec{w}}'');X), \quad &\vec{n}&\in \NN_1^\ell, \, \vec{n}=s\vec{a}^{-1}, \\
  \F:= & H^{\vec{s}, \vec{p}}_{\pzcd}(\RRd, (w_\gam, \tilde{\vec{w}}'');X), \quad &\vec{s}&\in (0,\infty)^\ell, \, \vec{s}=s\vec{a}^{-1}.
\end{aligned}
\end{equation}
\begin{corollary}\label{cor:Trm_ani_Sob_Bessel}
  Let $\ell\in \NN_1$, $\pzcd_1=1$, $\vec{p}\in (1,\infty)^{\ell}$, $q\in [1,\infty]$, $\vec{a}\in (0,\infty)^{\ell}$, $\gam\in (-1,p_1-1)$, $m\in \NN_0$ and let $X$ be a Banach space. Let $w_{\gam}(x_1)=|x_1|^\gam$, $\tilde{w}_j\in A_{p_j}(\RR^{\pzcd_j})$ for $j\in\{2,\dots, \ell\}$ and let $\vec{w}=(w_{\gam}, \tilde{w}_2,\dots, \tilde{w}_{\ell})$. Furthermore, let $\F$ be as in \eqref{eq:ani_WH}. Then, for $s> a_1(m+\frac{\gam+1}{p_1})$, the trace operator 
  \begin{equation*}
    \overline{\Tr}_m: \F\to \prod_{j=0}^m F^{s-a_1(j+\frac{\gam+1}{p_1}), \vec{a}''}_{\vec{p}'', p_1, \pzcd''}(\RR^{d-1}, \tilde{\vec{w}}'';X)
  \end{equation*}
  is a continuous and surjective operator. Moreover, there exists a continuous right inverse $\overline{\ext}_m$ of $\overline{\Tr}_m$.
\end{corollary}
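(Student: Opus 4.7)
The plan is to deduce the corollary from Theorem \ref{thm:trm_ani_TL} by sandwiching $\F$ between two anisotropic Triebel--Lizorkin spaces that share the same trace space. Concretely, under the strengthened assumption $\gam\in(-1,p_1-1)$ (which ensures $w_\gam\in A_{p_1}(\RR)$, so that the Sobolev/Bessel potential spaces are well-defined in the sense required), the embeddings
\begin{equation*}
  F^{s,\vec{a}}_{\vec{p},1,\pzcd}(\RRd,(w_\gam,\tilde{\vec{w}}'');X)\hookrightarrow \F \hookrightarrow F^{s,\vec{a}}_{\vec{p},\infty,\pzcd}(\RRd,(w_\gam,\tilde{\vec{w}}'');X)
\end{equation*}
hold for both choices in \eqref{eq:ani_WH}, as pointed out in the text (see \cite[Eq.~(16)]{Li20}). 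The crucial observation is that the trace space appearing in Theorem \ref{thm:trm_ani_TL} is \emph{independent} of the microscopic parameter $q$, so the two flanking Triebel--Lizorkin spaces produce the \emph{same} boundary product space.

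For continuity of $\overline{\Tr}_m$, I would apply Theorem \ref{thm:trm_ani_TL} with $q=\infty$ and compose with the right-hand embedding: for any $f\in \F$ we get
\begin{equation*}
  \|\overline{\Tr}_m f\|_{\prod_{j=0}^m F^{s-a_1(j+\frac{\gam+1}{p_1}),\vec{a}''}_{\vec{p}'',p_1,\pzcd''}(\RR^{d-1},\tilde{\vec{w}}'';X)} \lesssim \|f\|_{F^{s,\vec{a}}_{\vec{p},\infty,\pzcd}(\RRd,(w_\gam,\tilde{\vec{w}}'');X)}\lesssim \|f\|_{\F}.
\end{equation*}
The condition $s>a_1(m+\frac{\gam+1}{p_1})$ needed to apply Theorem \ref{thm:trm_ani_TL} is exactly the one imposed.

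For surjectivity and the existence of a continuous right inverse, I would instead apply Theorem \ref{thm:trm_ani_TL} with $q=1$, which yields a continuous extension operator
\begin{equation*}
  \overline{\ext}_m:\prod_{j=0}^m F^{s-a_1(j+\frac{\gam+1}{p_1}),\vec{a}''}_{\vec{p}'',p_1,\pzcd''}(\RR^{d-1},\tilde{\vec{w}}'';X)\to F^{s,\vec{a}}_{\vec{p},1,\pzcd}(\RRd,(w_\gam,\tilde{\vec{w}}'');X)
\end{equation*}
satisfying $\overline{\Tr}_m\circ \overline{\ext}_m=\id$. Composing with the left-hand embedding into $\F$ yields a continuous right inverse of $\overline{\Tr}_m$ valued in $\F$, proving surjectivity at the same time.

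The only nontrivial point is checking the sandwich embeddings for the two candidate spaces in \eqref{eq:ani_WH}; this is where the restriction $\gam\in(-1,p_1-1)$ is used and is covered by the cited references. Once these embeddings are in hand, the proof is essentially a soft composition argument and no further analysis of convolution kernels is needed, since all the hard work was already absorbed into Theorem \ref{thm:trm_ani_TL}.
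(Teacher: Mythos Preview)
Your proposal is correct and follows exactly the approach taken in the paper: the corollary is deduced from Theorem \ref{thm:trm_ani_TL} via the sandwich embeddings $F^{s,\vec{a}}_{\vec{p},1,\pzcd}\hookrightarrow \F\hookrightarrow F^{s,\vec{a}}_{\vec{p},\infty,\pzcd}$ (cited from \cite[Eq.~(16)]{Li20}), using the $q$-independence of the trace space to obtain continuity from the $q=\infty$ case and the extension operator from the $q=1$ case.
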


\section{Traces of intersections of weighted Sobolev spaces}\label{sec:trace_RRdh}
In this section, we characterise the higher-order trace space of an intersection of weighted Sobolev spaces with spatial domain $\OO=\RRdh$ and with both bounded and unbounded time intervals. We consider spatial and temporal power weights of the form $w_{\gam}(x) = |x_1|^\gam$ and $v_{\mu}(t)= |t|^\mu$, respectively. The main result of this section reads as follows.

\begin{theorem}\label{thm:spatialtracespace}
  Let $p,q\in(1,\infty)$, $k, m\in\NN_0$, $k_1,\ell\in \NN_1$, $\gam\in (-1, (k_1-m)p-1)\setminus\{jp-1: j\in\NN_1\}$, $\mu\in(-1, q-1)$, and let $X$ be a $\UMD$ Banach space. Furthermore, let $I=\RR$ or $I=(0, T)$ for some $T\in(0,\infty]$. Then the trace operator $\overline{\Tr}_{m}$ from
  \begin{equation*}
    W^{\ell,q}(I, v_\mu; W^{k,p}(\RRdh, w_{\gam+kp};X))\cap L^q(I, v_\mu; W^{k+k_1,p}(\RRdh, w_{\gam+kp};X))
  \end{equation*}
  to 
  \begin{equation*}
    \prod_{j=0}^m F_{q,p}^{\ell - \frac{\ell}{k_1}(j+\frac{\gam+1}{p})}(I, v_\mu; L^p(\RR^{d-1};X))\cap L^q(I, v_\mu; B_{p,p}^{k_1-j-\frac{\gam+1}{p}}(\RR^{d-1};X)).
  \end{equation*}
  is continuous and surjective. Moreover, there exists a continuous right inverse $\overline{\ext}_m$ of $\overline{\Tr}_m$.
\end{theorem}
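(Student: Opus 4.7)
My plan is to reduce the theorem to Corollary \ref{cor:Trm_ani_Sob_Bessel} via two Newton-polygon-type identifications: one on the domain side to recognise the left-hand intersection as a single anisotropic weighted Sobolev space, and one on the trace side to re-express the resulting anisotropic Triebel--Lizorkin space as the stated intersection of scalar spaces. The first step is to reduce to $I = \R$: for $I = (0, T)$ or $I = (0, \infty)$, I introduce a bounded linear extension in time of higher-order Seeley-type, simultaneously bounded on $L^q(I, v_\mu; \cdot)$ and $W^{\ell, q}(I, v_\mu; \cdot)$ because $v_\mu \in A_q(\R)$ for $\mu \in (-1, q-1)$. Since $\overline{\Tr}_m$ acts purely in space, it commutes with temporal extension and restriction, so it suffices to treat $I = \R$.

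\emph{Step 2 (anisotropic identification).} Working on $\R$, I first address $k = 0$ and $\gam \in (-1, p - 1)$, where $w_\gam$ is Muckenhoupt. The intersection
\[ \F_0 := W^{\ell, q}(\R, v_\mu; L^p(\RRdh, w_\gam; X)) \cap L^q(\R, v_\mu; W^{k_1, p}(\RRdh, w_\gam; X)) \]
coincides with the anisotropic weighted Sobolev space $W^{\vec n, \vec p}_\pzcd(\R \times \RRdh, \vec w; X)$ for $\pzcd = (1, d-1, 1)$, $\vec p = (p, p, q)$, $\vec w = (w_\gam, 1, v_\mu)$, $\vec a = (1, 1, k_1/\ell)$, and $\vec n = k_1 \vec a^{-1} = (k_1, k_1, \ell)$; the two intersected spaces are exactly the vertices $(k_1, 0)$ and $(0, \ell)$ of the associated Newton polygon, and the equivalence follows from the definition of $W^{\vec n, \vec p}_\pzcd$, with the UMD hypothesis on $X$ needed to transfer between pure and mixed derivative formulations via the $\mathcal R$-bounded $H^\infty$-calculus of $\partial_t$ and $-\partial_{x_1}^2$. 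For general $k \geq 1$ or $\gam \geq p - 1$ the weight $w_{\gam + kp}$ leaves the Muckenhoupt range, and I iterate in $\partial_{x_1}$ to step the weight down through the intermediate scales $w_{\gam + ip}$ ($0 \leq i \leq k$), each reduction being controlled by a Hardy-type inequality. The exclusion $\gam \notin \{jp - 1 : j \in \N_1\}$ ensures that none of these weights sits on a critical trace-regularity threshold.

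\emph{Step 3 (anisotropic trace and reinterpretation).} With $\F_0$ identified as above, Corollary \ref{cor:Trm_ani_Sob_Bessel} applied with $\pzcd_1 = 1$, $a_1 = 1$ and inner weight $w_\gam$ at smoothness $s = k_1$ yields that $\overline{\Tr}_m$ is continuous and surjective from $\F_0$ onto
\[ \prod_{j=0}^m F^{k_1 - j - \frac{\gam + 1}{p}, (1, k_1/\ell)}_{(p, q), p, (d-1, 1)}\big(\R^{d-1} \times \R, (1, v_\mu); X\big), \]
with a common right inverse $\overline{\ext}_m$. A second Newton polygon representation, this time for anisotropic Triebel--Lizorkin spaces, identifies each factor with
\[ F^{\ell - \frac{\ell}{k_1}(j + \frac{\gam + 1}{p})}_{q, p}(\R, v_\mu; L^p(\R^{d-1}; X)) \cap L^q(\R, v_\mu; B^{k_1 - j - \frac{\gam + 1}{p}}_{p, p}(\R^{d-1}; X)). \]
The anisotropy $(1, k_1/\ell)$ converts the spatial smoothness $k_1 - j - (\gam + 1)/p$ into the time smoothness $\ell - (\ell/k_1)(j + (\gam + 1)/p)$, and the Besov space at the space vertex arises because the microscopic parameter of the anisotropic $F$-space equals the spatial integrability $p$, so that $F^s_{p, p} = B^s_{p, p}$. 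Combined with Step 1 this completes the proof.

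The main obstacle is Step 2 when $\gam + kp \geq p - 1$, so that $w_{\gam + kp}$ is not Muckenhoupt: there the identification $W^{k, p}(w) = H^{k, p}(w)$ breaks down, and the anisotropic representation has to be built by an inductive $\partial_{x_1}$-argument coupled with weighted Hardy inequalities that cross each critical threshold $\gam = jp - 1$, which is exactly why these exponents are excluded in the hypothesis.
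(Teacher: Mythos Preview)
Your global plan---reduce to $I=\RR$ by a Seeley-type extension, identify the intersection with a weighted anisotropic Sobolev space, apply Corollary~\ref{cor:Trm_ani_Sob_Bessel}, and re-express the trace via the intersection representation Theorem~\ref{thm:TL_intersection_repr}---matches the paper's argument exactly in the Muckenhoupt range $k=0$, $\gam\in(-1,p-1)$ (this is Theorem~\ref{thm:Trm_H-W_intersection}). The genuine gap is your Step~2 outside that range. Hardy-type inequalities together with $\partial_{x_1}$-iteration yield only \emph{one-sided} embeddings such as $W^{k+r,p}(\RRdh,w_{\gam+kp})\hookrightarrow L^p(\RRdh,w_{\gam-rp})$. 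Combined with complex interpolation of the weighted Sobolev scale (which is where the paper actually invokes UMD, not for a mixed-derivative theorem), this does embed the original intersection into a Muckenhoupt-weighted space $H^{\ell(1-r/k_1),q}(L^p)\cap L^q(W^{k_1-r,p})$ with weight $w_{\gam-rp}$, and continuity of $\overline{\Tr}_m$ then follows from Theorem~\ref{thm:Trm_H-W_intersection}. That is indeed the paper's trace-direction argument.

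The right inverse, however, cannot be obtained this way. Corollary~\ref{cor:Trm_ani_Sob_Bessel} only furnishes an extension into the \emph{reduced} Muckenhoupt space; there is no Hardy inequality in the reverse direction that would place the extension back into the original space with weight $w_{\gam+kp}$ and spatial smoothness $k+k_1$. Your ``inductive $\partial_{x_1}$-argument coupled with weighted Hardy inequalities'' is therefore not a two-sided identification, and the extension half of the theorem is unproved in your outline. The paper handles the two directions asymmetrically: for the extension it bypasses the Muckenhoupt reduction entirely, applies Theorem~\ref{thm:trm_ani_TL} (valid for all $\gam>-1$) to land in an anisotropic Triebel--Lizorkin space $F^{1,(a_1,a_2)}_{(p,q),1,(d,1)}$ with $a_1=\tfrac{1}{k+k_1}$, $a_2=\tfrac{k_1}{\ell(k+k_1)}$, and then proves a separate embedding of that space into the original intersection via a restriction-plus-differentiation argument. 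This embedding is the missing ingredient in your proposal.
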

We recall that $X$ is a $\UMD$ (unconditional martingale differences) Banach space if and only if the Hilbert transform extends to a bounded operator on $L^p(\RR;X)$. The $\UMD$ property is a necessary condition for many results on vector-valued spaces. In particular, Hilbert spaces (and thus $\CC$) are $\UMD$ Banach spaces. For more details we refer to \cite[Chapter 4 \& 5]{HNVW16}. 

\begin{remark} We make the following remarks concerning Theorem \ref{thm:spatialtracespace}.
\hspace{2em}
\begin{enumerate}[(i)]
    \item The trace space in Theorem \ref{thm:spatialtracespace} does not depend on the smoothness parameter $k$. This agrees with known trace results for weighted Sobolev spaces, see, e.g., \cite{Ro25}.
    \item If $I=\RR$, then the result of Theorem \ref{thm:spatialtracespace} actually holds for any temporal weight $v\in A_q(\RR)$. The case $I=(0,T)$ follows from using appropriate extension operators, which are easier to construct if $v=v_\mu$ with $\mu\in(-1,q-1)$. We elaborate on the construction of extension operators in Remark \ref{rem:ext_op}.
\end{enumerate}
\end{remark}

Before we turn to the proof of Theorem \ref{thm:spatialtracespace}, we need some additional results. First, we recall the following intersection representation for Triebel--Lizorkin spaces. A more general version can be found in \cite{Li21}, while the unweighted scalar-valued case is contained in \cite[Proposition 3.23]{DK13}. 
\begin{theorem}[{\cite[Theorem 5.2.35]{Li14}}]\label{thm:TL_intersection_repr}
  Let $p,q\in(1,\infty)$, $\vec{a}\in (0,\infty)^2$, $s>0$, $\vec{w}\in A_{p}(\RR^{\pzcd_1})\times A_q(\RR^{\pzcd_2})$ and let $X$ be a Banach space. Then
  \begin{align*}
    F^{s, \vec{a}}_{(p,q),p, \pzcd}(\RRd,\vec{w};X) 
    &=F^{\frac{s}{a_2}}_{q,p}\big(\RR^{\pzcd_2}, w_2; L^p(\RR^{\pzcd_1}, w_1;X)\big)\cap L^q\big(\RR^{\pzcd_2}, w_2;F_{p,p}^{\frac{s}{a_1}}(\RR^{\pzcd_1},w_1;X)\big).
  \end{align*}
\end{theorem}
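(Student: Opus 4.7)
The strategy is to identify, up to a sandwich between anisotropic Triebel--Lizorkin spaces, the solution space
\begin{equation*}
Z_k := W^{\ell,q}(\RR, v_\mu; W^{k,p}(\RRdh, w_{\gam+kp};X))\cap L^q(\RR, v_\mu; W^{k+k_1,p}(\RRdh, w_{\gam+kp};X))
\end{equation*}
with a weighted anisotropic Triebel--Lizorkin space on $\RR^{d+1}$ (restricted to the half-space in $x_1$), apply the higher-order trace Theorem \ref{thm:trm_ani_TL}, and convert the result via the intersection representation Theorem \ref{thm:TL_intersection_repr}. The case $I=(0,T)$ is reduced to $I=\RR$ by constructing bounded extension operators; for $\mu\in(-1,q-1)$ the weight $v_\mu$ lies in $A_q(\RR)$, so weighted Rychkov/reflection-type extensions work simultaneously for both the solution space and the trace space, as clarified in Remark \ref{rem:ext_op}.

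For $I=\RR$, consider the decomposition $\pzcd=(1,d-1,1)$ of $\RR^{d+1}$ corresponding to $(x_1,\tilde x,t)$, with parabolic anisotropy $\vec a=(1,1,k_1/\ell)$, integrability $\vec p=(p,p,q)$, and weight $\vec w=(w_{\gam+kp},1,v_\mu)$. The core analytic step is the Littlewood--Paley sandwich
\begin{equation*}
F^{k+k_1,\vec a}_{\vec p,1,\pzcd}(\RR^{d+1}_+,\vec w;X)\hookrightarrow Z_k\hookrightarrow F^{k+k_1,\vec a}_{\vec p,\infty,\pzcd}(\RR^{d+1}_+,\vec w;X),
\end{equation*}
where the half-space spaces are restriction (factor) spaces. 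The lower embedding uses UMD-valued anisotropic Mihlin multiplier theorems applied to the Bessel symbol $(1+|\xi|_{\pzcd,\vec a}^2)^{(k+k_1)/2}$ to produce all the mixed derivatives required for membership in $Z_k$; the upper embedding uses Peetre maximal function estimates on the Littlewood--Paley blocks direction-by-direction. Since anisotropic Triebel--Lizorkin trace spaces are independent of the microscopic parameter, the two sandwich endpoints have identical traces, and this common trace must therefore coincide with the trace of $Z_k$.

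Applying Theorem \ref{thm:trm_ani_TL} with $s=k+k_1$, $a_1=1$, weight exponent $\gam+kp>-1$ on the boundary variable, the threshold $s>a_1(m+(\gam+kp+1)/p)$ becomes $\gam<(k_1-m)p-1$, matching the hypothesis. The $j$-th trace smoothness computes as
\begin{equation*}
(k+k_1)-\bigl(j+\tfrac{\gam+kp+1}{p}\bigr)=k_1-j-\tfrac{\gam+1}{p}=:s_j,
\end{equation*}
with the $k$-dependence cancelling exactly --- this is precisely the source of the smoothness-independence asserted in the theorem. The $j$-th trace space is thus $F^{s_j,(1,k_1/\ell)}_{(p,q),p,(d-1,1)}(\RR^d,(1,v_\mu);X)$, and Theorem \ref{thm:TL_intersection_repr} with $\pzcd=(d-1,1)$, $\vec a=(1,k_1/\ell)$ rewrites this as
\begin{equation*}
F^{\ell s_j/k_1}_{q,p}(\RR,v_\mu;L^p(\RR^{d-1};X))\cap L^q(\RR,v_\mu;B^{s_j}_{p,p}(\RR^{d-1};X)),
\end{equation*}
using $F^{s_j}_{p,p}=B^{s_j}_{p,p}$ and $\ell s_j/k_1=\ell-(\ell/k_1)(j+(\gam+1)/p)$; this matches the stated trace space. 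The right inverse $\overline{\ext}_m$ is inherited from the one in Theorem \ref{thm:trm_ani_TL}, followed by restriction to $\RRdh$.

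The main obstacle will be the sandwich step when $\gam+kp\geq p-1$ falls outside the Muckenhoupt range $(-1,p-1)$. In that regime, standard weighted Littlewood--Paley theory does not apply on all of $\RR$, and one must work with factor-space (restriction) definitions on $\RR_+$ and exploit iterated weighted Hardy inequalities to pass between integer smoothness levels. The excluded set $\{jp-1:j\in\NN_1\}$ corresponds exactly to the critical exponents at which these Hardy-type inequalities degenerate, which is why it is removed from the allowed range of $\gam$. A secondary subtlety is to verify that the extension operator constructed from the kernels of Lemma \ref{lem:ext_op_schwartz} actually maps into $Z_k$ on the half-space rather than merely into the ambient full-space anisotropic Triebel--Lizorkin space.
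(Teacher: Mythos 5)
Your proposal proves the wrong statement. The theorem in question is the intersection representation Theorem~\ref{thm:TL_intersection_repr}, which identifies the weighted anisotropic Triebel--Lizorkin space $F^{s,\vec a}_{(p,q),p,\pzcd}(\RR^d,\vec w;X)$ with the intersection
\begin{equation*}
F^{s/a_2}_{q,p}\bigl(\RR^{\pzcd_2},w_2;L^p(\RR^{\pzcd_1},w_1;X)\bigr)\cap L^q\bigl(\RR^{\pzcd_2},w_2;F^{s/a_1}_{p,p}(\RR^{\pzcd_1},w_1;X)\bigr).
\end{equation*}
What you have written is instead a proof sketch of Theorem~\ref{thm:spatialtracespace} (the trace characterisation on the half-space): you introduce the solution space $Z_k$, sandwich it between anisotropic Triebel--Lizorkin spaces, apply the higher-order trace Theorem~\ref{thm:trm_ani_TL}, and then \emph{invoke} Theorem~\ref{thm:TL_intersection_repr} to rewrite the trace space into its Triebel--Lizorkin/Besov intersection form. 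Using the very theorem you were asked to prove as a black-box ingredient is circular for the present task, and indeed your entire argument --- the sandwich, the choice $\vec a=(1,1,k_1/\ell)$, the cancellation of the $k$-dependence, the Hardy-inequality discussion for $\gam+kp\ge p-1$, the excluded set $\{jp-1:j\in\NN_1\}$ --- has no bearing on the intersection representation itself.

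The actual content of Theorem~\ref{thm:TL_intersection_repr} is a purely Littlewood--Paley statement and the paper does not reprove it; it is quoted from \cite[Theorem~5.2.35]{Li14}. A proof proceeds by replacing the single one-parameter anisotropic dyadic decomposition of $\RR^d=\RR^{\pzcd_1}\times\RR^{\pzcd_2}$ by the tensor product of two directional Littlewood--Paley decompositions, one in each factor, and showing via the weighted mixed-norm Fefferman--Stein maximal inequality that the resulting doubly indexed square function is comparable to the anisotropic one; splitting the double sum along the diagonal $a_1 n_1 \le a_2 n_2$ versus $a_1 n_1 > a_2 n_2$ then yields the two halves of the intersection. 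The hypothesis that the outer microscopic parameter equals the inner integrability exponent $p$ is what permits the Fubini-type exchange of the $\ell^p$-sum with the $L^p$-integral over $\RR^{\pzcd_1}$ and is therefore essential. None of this appears in your proposal; you should redirect your effort to this Littlewood--Paley decoupling rather than to the trace machinery.
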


We can now identify the trace space of an intersection of Sobolev and Bessel potential spaces with Muckenhoupt weights. Additionally, recall that $B^{s}_{p,p}(\RR^{d-1};X)=F^{s}_{p,p}(\RR^{d-1};X)$ for all $s\in \RR$ and $p\in(1,\infty)$, see, for instance \cite[Proposition 14.6.8]{HNVW24}.

\begin{theorem}\label{thm:Trm_H-W_intersection}
  Let $p,q\in(1,\infty)$, $m\in\NN_0$, $n_1,n_2\in \NN_1$, $s_1,s_2\in (0,\infty)$, $v\in A_q(\RR)$, $\gam\in(-1,p-1)$ and let $X$ be a Banach space. If $n_2, s_2> m+ \frac{\gam+1}{p}$, then 
  \begin{align*}
    \overline{\Tr}_m &: W^{n_1,q}(\RR, v; L^p(\RRdh, w_{\gam};X))\cap L^q(\RR, v; W^{n_2,p}(\RRdh, w_{\gam};X)) \\ 
    &\;\to \prod_{j=0}^m F^{n_1-\frac{n_1}{n_2}(j+\frac{\gam+1}{p})}_{q,p}(\RR, v; L^p(\RR^{d-1};X))\cap L^q(\RR, v; B^{n_2-j-\frac{\gam+1}{p}}_{p,p}(\RR^{d-1};X))
  \end{align*}
  and 
  \begin{align*}
    \overline{\Tr}_m &: H^{s_1,q}(\RR, v; L^p(\RRdh, w_{\gam};X))\cap L^q(\RR, v; H^{s_2,p}(\RRdh, w_{\gam};X)) \\ 
    &\;\to \prod_{j=0}^m F^{s_1-\frac{s_1}{s_2}(j+\frac{\gam+1}{p})}_{q,p}(\RR, v; L^p(\RR^{d-1};X))\cap L^q(\RR, v; B^{s_2-j-\frac{\gam+1}{p}}_{p,p}(\RR^{d-1};X))
  \end{align*}
  are continuous and surjective. Moreover, there exists a continuous right inverse $\overline{\ext}_m$ of $\overline{\Tr}_m$.
\end{theorem}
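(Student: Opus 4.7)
The plan is to rewrite the intersection space as a single anisotropic Bessel potential (or Sobolev) space on $\RR^{d+1}$ whose first coordinate block is $x_1$, apply Corollary~\ref{cor:Trm_ani_Sob_Bessel} to compute the trace onto $\{x_1=0\}$, and finally invoke the intersection representation Theorem~\ref{thm:TL_intersection_repr} to reinterpret the resulting anisotropic Triebel--Lizorkin space as the intersection claimed in the theorem. Concretely, I would work with the $\pzcd$-decomposition $\pzcd=(1,d-1,1)$ of $\RR^{d+1}$ corresponding to coordinates $(x_1,\tilde x,t)$, exponent vector $\vec p=(p,p,q)$, weight vector $\vec w=(w_\gam,1,v)$ (note that $w_\gam\in A_p(\RR)$ because $\gam\in(-1,p-1)$), and anisotropy $\vec a=(1/n_2,1/n_2,1/n_1)$ in the Sobolev case and $\vec a=(1/s_2,1/s_2,1/s_1)$ in the Bessel potential case. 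Since $w_\gam$ is Muckenhoupt, the extension operator from $\RRdh$ to $\RR^d$ in the weighted setting reduces everything to the full space.

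The main obstacle will be the identification
\begin{align*}
  &W^{n_1,q}(\RR,v;L^p(\RR^d,w_\gam;X))\cap L^q(\RR,v;W^{n_2,p}(\RR^d,w_\gam;X)) \\
  &\qquad=W^{(n_2,n_2,n_1),(p,p,q)}_{\pzcd}(\RR^{d+1},(w_\gam,1,v);X),
\end{align*}
together with its direct analogue for Bessel potential spaces, with equivalent norms. The inclusion $\subseteq$ is immediate from the definition of $W^{\vec n,\vec p}_{\pzcd}$ via block-pure derivatives. For the converse, pure control of the $x_1$- and $\tilde x$-derivatives of order $n_2$ separately must imply control of every mixed spatial derivative of order at most $n_2$. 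Using the integer-order coincidence $W^{n,p}(\RR^d,w_\gam;X)=H^{n,p}(\RR^d,w_\gam;X)$ for $X$ $\UMD$, this reduces to the weighted, UMD-valued Mikhlin boundedness of the Fourier symbols $(1+|\xi_1|^2+|\tilde\xi|^2)^{n_2/2}/\bigl((1+|\xi_1|^2)^{n_2/2}+(1+|\tilde\xi|^2)^{n_2/2}\bigr)$ and its reciprocal, which identify the isotropic Bessel potential $H^{n_2,p}(\RR^d,w_\gam;X)$ with the block-anisotropic one $H^{(n_2,n_2),(p,p)}_{(1,d-1)}(\RR^d,(w_\gam,1);X)$. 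The Bessel potential case proceeds identically and avoids the $W=H$ step in the spatial variable.

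With the identification in hand, Corollary~\ref{cor:Trm_ani_Sob_Bessel} applied with $\pzcd_1=1$, $p_1=p$, and the normalisation $s=1$, $a_1=1/n_2$ yields that $\overline{\Tr}_m$ is continuous and surjective onto
\begin{equation*}
  \prod_{j=0}^{m}F^{1-\frac{1}{n_2}(j+\frac{\gam+1}{p}),\vec a''}_{(p,q),p,(d-1,1)}(\RR^d,(1,v);X),
\end{equation*}
with $\vec a''=(1/n_2,1/n_1)$, and the admissibility condition $s>a_1(m+(\gam+1)/p)$ becomes exactly the hypothesis $n_2>m+(\gam+1)/p$. Next, applying Theorem~\ref{thm:TL_intersection_repr} with $\pzcd''=(d-1,1)$ and $(\vec p'',\vec w'')=((p,q),(1,v))$ rewrites each factor as
\begin{equation*}
  F^{n_1-\frac{n_1}{n_2}(j+\frac{\gam+1}{p})}_{q,p}(\RR,v;L^p(\RR^{d-1};X))\cap L^q(\RR,v;F^{n_2-j-\frac{\gam+1}{p}}_{p,p}(\RR^{d-1};X)),
\end{equation*}
and the identity $F^{s}_{p,p}=B^{s}_{p,p}$ yields the trace space in the theorem. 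A continuous right inverse $\overline{\ext}_m$ is then obtained by composing the right inverse of Corollary~\ref{cor:Trm_ani_Sob_Bessel} with the isomorphisms from the previous steps, followed by the restriction from $\RR^d$ to $\RRdh$. The Bessel potential case is identical with $n_j$ replaced throughout by $s_j$.
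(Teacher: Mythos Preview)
Your overall strategy---pass to the full space via the Muckenhoupt extension operator, identify the intersection as an anisotropic Sobolev/Bessel potential space, apply Corollary~\ref{cor:Trm_ani_Sob_Bessel}, and then use Theorem~\ref{thm:TL_intersection_repr} to unpack the resulting anisotropic Triebel--Lizorkin space---is exactly the paper's approach. The difference is in the choice of $\pzcd$-decomposition, and that difference matters.

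You work with $\pzcd=(1,d-1,1)$ and therefore need to identify the intersection with $W^{(n_2,n_2,n_1),(p,p,q)}_{(1,d-1,1)}$; as you correctly observe, the nontrivial inclusion requires a Mikhlin multiplier argument and hence that $X$ be $\UMD$. But Theorem~\ref{thm:Trm_H-W_intersection} is stated for an \emph{arbitrary} Banach space $X$, so as written your argument only proves a weaker statement. The paper instead takes the coarser decomposition $\pzcd=(d,1)$, for which the identification
\[
W^{(n_2,n_1),(p,q)}_{(d,1)}(\RR^{d+1},(w_\gam,v);X)=W^{n_1,q}(\RR,v;L^p(\RR^d,w_\gam;X))\cap L^q(\RR,v;W^{n_2,p}(\RR^d,w_\gam;X))
\]
is immediate from the definition of $J_{\vec n,\pzcd}$ (and similarly for the Bessel potential case). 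No multiplier theorem, no $\UMD$. To then apply the trace result one uses that the anisotropic Triebel--Lizorkin spaces for $(d,1)$ and $(1,d-1,1)$ coincide whenever the first two blocks carry the same anisotropy, integrability and weight, so that the sandwich $F^{1,\vec a}_{\vec p,1,\pzcd}\hookrightarrow W^{(n_2,n_1),(p,q)}_{(d,1)}\hookrightarrow F^{1,\vec a}_{\vec p,\infty,\pzcd}$ holds with $\pzcd_1=1$, and the principle underlying Corollary~\ref{cor:Trm_ani_Sob_Bessel} (the trace space is independent of the microscopic parameter) applies. Your ``main obstacle'' is thus an artefact of the finer decomposition and disappears entirely if you coarsen to $\pzcd=(d,1)$.
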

\begin{proof}
  We only consider the case of Sobolev spaces, since the proof for the Bessel potential spaces is completely similar. By a standard restriction argument, it suffices to consider the spatial domain $\RR^d$ instead of $\RRdh$. This is possible as we are in the range of Muckenhoupt weights, and therefore extension operators are available (see \cite[Section 5]{LMV17}). Note that we have the identification
  \begin{align*}
  W^{(n_2,n_1), (p,q)}_{(d,1)}&(\RR^{d+1}, (w_{\gam}, v); X)\\
    =&\;W^{n_1,q}(\RR, v; L^p(\RRd, w_{\gam};X))\cap L^q(\RR, v; W^{n_2,p}(\RRd, w_{\gam};X)),
  \end{align*}
  so that Corollary \ref{cor:Trm_ani_Sob_Bessel} and Theorem \ref{thm:TL_intersection_repr} imply that 
  \begin{align*}
     \overline{\Tr}_m : W&^{n_1,q}(\RR, v; L^p(\RRd, w_{\gam};X))\cap L^q(\RR, v; W^{n_2,p}(\RRd, w_{\gam};X)) \\  \to&\prod_{j=0}^m F^{1-\frac{1}{n_2}(j+\frac{\gam+1}{p}), (\frac{1}{n_2}, \frac{1}{n_1})}_{(p,q),p, (d-1,1)}(\RR^d, (1, v);X)\\
      =&\prod_{j=0}^m F^{n_1-\frac{n_1}{n_2}(j+\frac{\gam+1}{p})}_{q,p}(\RR, v; L^p(\RR^{d-1};X))\cap L^q(\RR, v; B^{n_2-j-\frac{\gam+1}{p}}_{p,p}(\RR^{d-1};X))
  \end{align*}
  is continuous and surjective. Moreover, Corollary \ref{cor:Trm_ani_Sob_Bessel} gives the existence of an extension operator as well.
\end{proof}

We can now characterise the spatial trace space of an intersection of weighted Sobolev spaces as stated in Theorem \ref{thm:spatialtracespace}.

\begin{proof}[Proof of Theorem \ref{thm:spatialtracespace}]
\textit{Step 1: the case $I=\RR$.} Let $v\in A_q(\RR)$.
  Let $r\in \{0, \dots, k_1-m -1\}$ be such that $\gam\in (rp-1, (r+1)p-1)$. By 
  \cite[Proposition 5.5]{LMV17}, \cite[Theorem 3.18]{LV18}, \cite[Proposition 6.6]{Ro25} (using that $X$ is $\UMD$) and Hardy's inequality (see, e.g.,\cite[Corollary 3.4]{LV18}), we obtain
  \begin{align*}
    W^{\ell,q}&\big(\RR, v; W^{k,p}(\RRdh, w_{\gam+kp};X)\big)\cap L^q\big(\RR, v; W^{k+k_1,p}(\RRdh, w_{\gam+kp};X)\big)  \\
    &\hookrightarrow H^{\ell (1-\frac{r}{k_1}),q}\big(\RR, v; [W^{k,p}(\RRdh, w_{\gam+kp};X), W^{k+k_1,p}(\RRdh, w_{\gam+kp};X)]_{\frac{r}{k_1}}\big)\\
    &= H^{\ell (1-\frac{r}{k_1}),q}\big(\RR, v; W^{k+r,p}(\RRdh, w_{\gam+kp};X)\big)\\
    &\hookrightarrow H^{\ell (1-\frac{r}{k_1}),q}\big(\RR, v; L^p(\RRd, w_{\gam-rp};X)\big).
  \end{align*}
By applying Hardy's inequality once more, we have
\begin{align*}
    W^{\ell,q}&(\RR, v; W^{k,p}(\RRdh, w_{\gam+kp};X))\cap L^q(\RR, v; W^{k+k_1,p}(\RRdh, w_{\gam+kp};X))  \\
    & \hookrightarrow H^{\ell (1-\frac{r}{k_1}),q}\big(\RR, v; L^p(\RRd, w_{\gam-rp};X)\big) \cap L^q\big(\RR, v; W^{k_1-r,p}(\RRdh, w_{\gam-rp};X)\big).
\end{align*}
Since $\gam-rp\in (-1,p-1)$ and $k_1-r> m+ \frac{\gam-rp+1}{p}$, Theorem \ref{thm:Trm_H-W_intersection} implies that $\overline{\Tr}_m$ is continuous and surjective from the latter space to
\begin{equation*}
  \prod_{j=0}^m F_{q,p}^{\ell (1-\frac{r}{k_1})-\frac{\ell(1-\frac{r}{k_1})}{k_1-r}(j+\frac{\gam-rp +1}{p})}\big(\RR, v; L^p(\RR^{d-1};X)\big)\cap L^q\big(\RR, v; B_{p,p}^{k_1-r-j-\frac{\gam-rp+1}{p}}(\RR^{d-1};X)\big).
\end{equation*}
which gives the desired trace space after rewriting. This shows that $\overline{\Tr}_m$ is continuous between the desired spaces. 

To prove the existence and continuity of the extension operator, it suffices to establish the embedding
\begin{equation}\label{eq:tracem_ext_emb}
\begin{aligned}
  &F^{1, (a_1, a_2)}_{(p,q),1, (d,1)}(\RRdh\times \RR, (w_\gam,v);X)\\
  &\;\hookrightarrow W^{\ell,q}(\RR, v; W^{k,p}(\RRdh, w_{\gam+kp};X))\cap L^q(\RR, v; W^{k+k_1,p}(\RRdh, w_{\gam+kp};X)),
  \end{aligned}
\end{equation}
with 
\begin{equation}\label{eq:valuesa1a2}
  a_1:= \frac{1}{k+k_1}\quad \text{ and }\quad a_2:=\frac{k_1}{\ell(k+k_1)}.
\end{equation}
Indeed, by Theorems \ref{thm:trm_ani_TL} and \ref{thm:TL_intersection_repr} we have that the extension operator $\overline{\ext}_m$ is continuous from 
\begin{equation*}
    \prod_{j=0}^m F_{q,p}^{\frac{1}{a_2}(1-a_1(j+\frac{\gam+kp+1}{p}))}(\RR, v; L^p(\RR^{d-1};X))\cap L^q(\RR, v; B_{p,p}^{\frac{1}{a_1}(1-a_1(j+\frac{\gam+kp+1}{p}))}(\RR^{d-1};X))
  \end{equation*}
to $F^{1, (a_1, a_2)}_{(p,q),1, (d,1)}(\RRdh\times \RR, (w_\gam,v);X)$. Rewriting using \eqref{eq:valuesa1a2} gives continuity of  $\overline{\ext}_m$ between the desired spaces.

It remains to prove the embedding \eqref{eq:tracem_ext_emb}. Similar as in the proof of \cite[Lemma 7.2]{LV18} it holds that the restriction operator
\begin{equation*}
  R: \mc{D}'(\RRd\times \RR, X)\to \mc{D}'(\RRdh\times \RR;X),\qquad f\mapsto f|_{\RRdh\times \RR}, 
\end{equation*}
restricts to a bounded linear operator on 
\begin{equation*}
  R: F^{0, (a_1, a_2)}_{(p,q),1, (d,1)}(\RRd\times \RR, (w_{\gam},v);X) \to L^q(\RR, v; L^p(\RRdh, w_{\gam};X)).
\end{equation*}
By \cite[Proposition 5.2.29]{Li14}, using that $a_1(k+k_1)=1$ and $a_1k+ a_2 \ell=1$, it follows that 
\begin{equation*}
\begin{aligned}
  &R: F^{1, (a_1, a_2)}_{(p,q),1, (d,1)}(\RRd\times \RR, (w_\gam,v);X)\\
  &\;\to W^{\ell,q}(\RR, v; W^{k,p}(\RRdh, w_{\gam+kp};X))\cap L^q(\RR, v; W^{k+k_1,p}(\RRdh, w_{\gam+kp};X)),
  \end{aligned}
\end{equation*}
is bounded as well. This proves \eqref{eq:tracem_ext_emb} and completes the proof if $I=\RR$.

\textit{Step 2: the case $I=(0, T)$.} This case follows from $I=\RR$ and a suitable extension operator. For the convenience of the reader, we include the details of the proof. First, let $I=(0,\infty)$. By $\overline{\Tr}_m$ and $\overline{\ext}_m$ we denote the (spatial) trace and extension operator from Step 1. For $j\in\{0,\dots, m\}$ we denote by $\Tr^{\RR_+}_j$ the (spatial) trace operator for functions defined on the time interval $I=\RR_+$, i.e., $\Tr_j^{\RR_+} f= f|_{\{0\}\times \RR^{d-1}}$ for $f\in \Cc^\infty(\overline{\RR_+}; \Cc^\infty(\overline{\RRdh};X))$. Furthermore, if $R$ is the restriction operator from $\RR$ to $\RR_+$, then it is clear that
\begin{equation}\label{eq:RTr=TrR}
    R \Tr_j f = \Tr_j^{\RR_+} R f,\qquad j\in\{0,\dots, m\},\, f\in \Cc^\infty(\RR; \Cc^\infty(\overline{\RRdh};X)).
\end{equation}

Let $E$ be a universal extension operator from $\RR_+$ to $\RR$ such that
\begin{equation}\label{eq:ext_F}
      \begin{aligned}
        &E:W^{\ell, q}(\RR_+, v_\mu;Y)\to W^{\ell, q}(\RR, v_\mu;Y)\quad \text{ and }\\
        &E:F_{q,p}^{\ell - \frac{\ell}{k_1}(j+\frac{\gam+1}{p})}(\RR_+, v_\mu;Y) \to F_{q,p}^{\ell - \frac{\ell}{k_1}(j+\frac{\gam+1}{p})}(\RR, v_\mu; Y)
    \end{aligned}
\end{equation}
are bounded for all parameters $p,q, k_1,\ell, j, \gam$ as specified in the theorem and Banach spaces $Y$. Such an extension operator exists by redoing the proof of \cite[Theorem 2.9.2]{Tr83} using the multiplier result \cite[Theorem 1.3]{MV15}. Note that one has to redo the proof of \cite[Theorem 1.3]{MV15} for the Triebel--Lizorkin and Sobolev spaces separately, since in the vector-valued case with $\UMD$ spaces the Sobolev scale is not contained in the Triebel--Lizorkin scale, see, e.g., \cite[Theorem 14.7.9]{HNVW24}.

Moreover, it holds that $RE f = f$ for $f\in \Cc^\infty(\overline{\RR_+}; \Cc^\infty(\overline{\RRdh};X))$. 
Hence, with \eqref{eq:RTr=TrR} it follows that 
\begin{equation*}
    \Tr_j^{\RR_+} f = R\Tr_j E f,\qquad j\in\{0,\dots, m\},\, f\in \Cc^\infty(\overline{\RR_+}; \Cc^\infty(\overline{\RRdh};X)).
\end{equation*}
By the properties of $R$ and $E$, Step 1, and density of $\Cc^\infty(\overline{\RR_+}; \Cc^\infty(\overline{\RRdh};X))$ in 
\begin{equation*}
    W^{\ell,q}(\RR_+,v_\mu; W^{k,p}(\RRdh, w_{\gam+ kp};X)) \cap L^q(\RR_+, v_\mu; W^{k+k_1,p}(\RRdh, w_{\gam+kp};X)),
\end{equation*}
we find that $\overline{\Tr}_m^{\RR_+}$ extends to an operator as stated in the theorem and we have 
    \begin{align*}
        \sum_{j=0}^m\|&\Tr_j^{\RR_+} f \|_{ F_{q,p}^{\ell - \frac{\ell}{k_1}(j+\frac{\gam+1}{p})}(\RR_+, v_\mu; L^p(\RR^{d-1};X))\cap L^q(\RR_+, v_\mu; B_{p,p}^{k_1-j-\frac{\gam+1}{p}}(\RR^{d-1};X))}\\
       & \lesssim \sum_{j=0}^m\|\Tr_j E f\|_{ F_{q,p}^{\ell - \frac{\ell}{k_1}(j+\frac{\gam+1}{p})}(\RR, v_\mu; L^p(\RR^{d-1};X))\cap L^q(\RR, v_\mu; B_{p,p}^{k_1-j-\frac{\gam+1}{p}}(\RR^{d-1};X))}\\
       & \lesssim\|E f\|_{W^{\ell,q}(\RR,v_\mu; W^{k,p}(\RRdh, w_{\gam+ kp};X)) \cap L^q(\RR, v_\mu; W^{k+k_1,p}(\RRdh, w_{\gam+kp};X))}\\
        &\lesssim \|f\|_{W^{\ell,q}(\RR_+,v_\mu; W^{k,p}(\RRdh, w_{\gam+ kp};X)) \cap L^q(\RR_+, v_\mu; W^{k+k_1,p}(\RRdh, w_{\gam+kp};X))}.
    \end{align*}
Define the extension operator by
\begin{equation*}
    \overline{\ext}_m^{\RR_+}:= R \circ \overline{\ext}_m\circ E.
\end{equation*}
Then again by properties of $R$ and $E$, and the continuity of the extension operator $\overline{\ext}_m$, we find the continuity of $\overline{\ext}_m^{\RR_+}$ as stated in the theorem. It remains to prove that this extension is the right inverse to the above constructed trace operator $\overline{\Tr}^{\RR_+}_m$.
Indeed, for $i,j\in \{0,\dots, m\}$ we have using \eqref{eq:RTr=TrR} twice and $RE=\id$ that
\begin{align*}
    \Tr^{\RR_+}_j \ext_i^{\RR_+} h& = R \Tr_j E R\ext_i E h
    = R \Tr_j \ext_i E h\\& = \delta_{i,j} RE h = \delta_{i,j}h,\quad h\in \Cc^\infty(\overline{\RR_+}; \Cc^\infty(\RR^{d-1};X)),
\end{align*}
    and by density this gives the desired result.

Finally, for $I=(0,T)$ with $T\in (0,\infty)$ the result follows similarly by using a universal extension operator from $(0,T)$ to $\RR$. To obtain such an extension operator one can proceed as in \cite[Proposition 2.5]{AV22}.
\end{proof}

\begin{remark}\label{rem:ext_op} There are alternative methods to obtain an extension operator satisfying \eqref{eq:ext_F}.
\begin{enumerate}[(i)]
    \item  The Rychkov extension operator from \cite{Ry99} can be used also in the   weighted and vector-valued setting.
    \item  One can also use the more classical extension operator for Sobolev spaces from \cite[Theorem 5.19]{AF03} (see \cite[Lemma 5.1]{LMV17} for the weighted setting). The boundedness of this extension operator on Triebel--Lizorkin spaces is then obtained by the $\ell^q$-interpolation method, see \cite{LL24}. With this extension operator we expect that one can treat more general (Muckenhoupt) weights, instead of only power weights $v_{\mu}$ with $\mu\in(-1,q-1)$.
\end{enumerate}
\end{remark}

\section{Localisation of traces to rough domains}\label{sec:trace_dom}
We continue with localising the trace result from Theorem \ref{thm:spatialtracespace} to bounded domains. Localisation procedures for smooth domains are quite standard in the literature, see, e.g., \cite[Section 3.6.1]{Tr78}. Since we deal with domains with low regularity, we provide the details of the localisation argument, which is based on the Dahlberg--Kenig--Stein pullback as used in, e.g., \cite{KimD07, KK04, LLRV25}. We first collect the required properties of this diffeomorphism in Section \ref{subsec:loc}. In Section \ref{subsec:trace_dom}, we apply the localisation to prove our main trace theorem on domains.

\subsection{Localisation and traces on domains}\label{subsec:loc} 
We start with some definitions regarding smoothness of domains.\\

Let $\kappa\in (0,1]$ and let $\OO\subseteq \RR^{d-1}$ be open. A function $h:\OO\to \RR$ is called \emph{uniformly $\kappa$-H\"older continuous on $\OO$} if 
\begin{equation*}
  [h]_{\kappa, \OO}:= \sup_{\substack{x, y\in \OO\\ x\neq y}}\frac{|h(x)-h(y)|}{|x-y|^\kappa}<\infty.
\end{equation*}
In addition, for $r\in \NN_0$ we define the space of $\kappa$-H\"older continuous functions by 
\begin{equation*}
  C_{\b}^{r,\kappa}(\OO):=\{f\in C_{\b}^{r}(\OO): [\d^\alpha h]_{\kappa, \OO}<\infty\text{ for all }|\alpha|= r\}.
\end{equation*}
For $\kappa=0$ we write $C_\b^{r,0}(\OO):=C^{r}_\b(\OO)$.  
By $\Cc^{r,\kappa}(\OO)$ we denote the subset of functions in $C^{r,\kappa}(\OO)$ with compact support in $\OO$. Moreover, on $C_\b^{r,\kappa}(\OO)$ we define the norm
\begin{equation*}
  \|h\|_{C^{r,\kappa}(\OO)}:= \sum_{|\alpha|\leq r}\sup_{x\in\OO}|\d^\alpha h(x)|+ \sum_{|\alpha|=r}[\d^\alpha h]_{\kappa, \OO}.
\end{equation*}

\begin{definition}\label{def:domains}
  Let $\Dom \subseteq \R^d$ be a domain, i.e., a connected open set. Let $r \in \N_0$ and $\kappa\in [0,1]$.
  \begin{enumerate}[(i)]
    \item We call $\mc{O}$ a \emph{special $\Cc^{r,\kappa}$-domain} if, after translation and rotation, it is of the form
        \begin{equation}\label{eq:specialdomainh}
          \mc{O} = \{(x_1,\tilde{x})\in \R^d: x_1>h(\tilde{x})\}
        \end{equation}
        for some $h \in \Cc^{r,\kappa}(\R^{d-1};\RR)$.
    \item Given a special $\Cc^{r,\kappa}$-domain $\mc{O}$, we define
\begin{equation*}
[\mc{O}]_{C^{r,\kappa}}:= \|h\|_{C^{r,\kappa}(\R^{d-1})},
\end{equation*}
where $h\in \Cc^{r,\kappa}(\R^{d-1};\RR)$ is such that, after rotation and translation, \eqref{eq:specialdomainh} holds. Note that $[\mc{O}]_{C^{r,\kappa}}$ is uniquely defined due to the compact support of $h$.
    \item We call $\mc{O}$ a \emph{$C^{r,\kappa}$-domain} if every boundary point $x \in \BDom$ admits an open neighbourhood $V$ such that
        \begin{equation*}
        \mc{O}\cap V = W \cap V \qquad \text{and}\qquad \BDom \cap V = \partial W \cap V
        \end{equation*}
        for some special $\Cc^{r,\kappa}$-domain $W$.
  \end{enumerate}
  If $\kappa=0$, then we write $C^r$ for $C^{r,0}$ in the definitions above.
\end{definition}

Next, we introduce a localisation procedure for rough domains. Let $\OO$ be a special $\Cc^{r, \kappa}$-domain with $r\in\NN_1$ and $\kappa\in[0,1]$. Then by \cite[Lemma A.4]{LLRV25}, there exists a $C^{r, \kappa}$-diffeomorphism $\Phi:\OO\to \RRdh$ which straightens the boundary and preserves the distance to the boundary. Moreover, this diffeomorphism is smooth in the interior, but higher-order derivatives may blow up near the boundary. This diffeomorphism is known as the \emph{Dahlberg--Kenig--Stein pullback} and is used in \cite{LLRV25} to obtain boundedness of the $\Hinf$-calculus for the Laplacian on rough domains.
In particular, $\Phi$ satisfies the following mapping properties.
\begin{proposition}[{\cite[Proposition 3.7]{LLRV25}}]\label{prop:isom}
  Let $p\in(1,\infty)$, $r\in \NN_1$, $\kappa\in [0,1]$, $k\in\NN_1$ such that $k\geq r+\kappa$ and $\gam\in ((k-(r+\kappa))p-1,\infty)\setminus\{jp-1:j\in\NN_1\}$. Furthermore, let $\OO$ be a special $\Cc^{r,\kappa}$-domain and let $X$ be a Banach space. Let $\Phi:\OO\to \RRdh$ be the Dahlberg--Kenig--Stein pullback and consider the change of coordinate mappings
  \begin{align*}
  \Phi_*&:W^{k,p}(\OO, w_{\gam}^{\d\OO};X)\to W^{k,p}(\RRdh, w_{\gam};X),
  \end{align*}
  defined by $\Phi_* f := f\circ \Phi^{-1}$. Then $\Phi_*$ is an isomorphism of Banach spaces for which $(\Phi^{-1})_*$ acts as inverse.
\end{proposition}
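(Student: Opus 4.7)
The plan is to combine the Fa\`a di Bruno chain rule with the precise pointwise estimates on the Dahlberg--Kenig--Stein pullback and to compensate for the blow-up of its higher-order derivatives near $\d\OO$ by means of Hardy's inequality. I would first recall from \cite[Lemma A.4]{LLRV25} that $\Phi:\OO\to\RRdh$ is a $C^{r,\kappa}$-diffeomorphism whose Jacobians are bounded and bounded away from $0$, which preserves the boundary distance in the sense that $\dist(\Phi^{-1}(y),\d\OO)\eqsim y_1$ for $y\in\RRdh$, whose derivatives of order at most $r$ lie in $L^\infty(\OO)$, and whose higher-order derivatives satisfy
\begin{equation*}
  |\d^\alpha\Phi(x)|\lesssim \dist(x,\d\OO)^{r+\kappa-|\alpha|}\quad\text{for }|\alpha|>r,
\end{equation*}
with the symmetric bound for $\Phi^{-1}$ in terms of $y_1$.

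Next, I would prove the boundedness of $\Phi_*$ first on $\Cc^\infty(\overline{\OO};X)$ and extend by density. For $|\alpha|\leq k$, the Fa\`a di Bruno formula expands $\d^\alpha(f\circ \Phi^{-1})(y)$ as a finite sum of terms of the form
\begin{equation*}
  (\d^\beta f)(\Phi^{-1}(y))\prod_{i=1}^{|\beta|}\d^{\gam_i}\Phi^{-1}(y),
\end{equation*}
with $1\leq|\beta|\leq|\alpha|$, $\gam_1+\dots+\gam_{|\beta|}=\alpha$ and $|\gam_i|\geq 1$. Using the estimates above, each such product is pointwise dominated by $y_1^{\sigma}$ with $\sigma:=\sum_{i:|\gam_i|>r}(r+\kappa-|\gam_i|)$. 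Changing variables $y=\Phi(x)$ and invoking the distance-preserving property then bounds the $L^p(\RRdh,w_\gam;X)$-norm of the corresponding summand by
\begin{equation*}
  \int_\OO|\d^\beta f(x)|^p\,\dist(x,\d\OO)^{\gam+\sigma p}\dd x.
\end{equation*}
The most singular case is $|\beta|=1$ with a single higher-order factor of maximal order $|\alpha|$, producing exponent $\gam+(r+\kappa-|\alpha|)p\geq \gam-(k-(r+\kappa))p$, which is strictly greater than $-1$ by the hypothesis on $\gam$.

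To convert these distance-weighted bounds into the desired $\|f\|_{W^{k,p}(\OO,w_\gam^{\d\OO};X)}$, I would iterate Hardy's inequality (cf.\ \cite[Corollary 3.4]{LV18}) to trade each negative power of the boundary distance for an additional derivative of $f$, yielding
\begin{equation*}
  \int_\OO|\d^\beta f|^p\,\dist^{\gam+\sigma p}\dd x\lesssim \sum_{|\beta'|\leq k}\int_\OO|\d^{\beta'}f|^p\,\dist^\gam\dd x,
\end{equation*}
whenever the intermediate weight exponents arising in the iteration avoid the critical values $jp-1$. Interchanging the roles of $\Phi$ and $\Phi^{-1}$ then gives the analogous bound for $(\Phi^{-1})_*$, and the identities $\Phi_*\circ(\Phi^{-1})_*=\id$ and $(\Phi^{-1})_*\circ\Phi_*=\id$ hold pointwise on smooth functions and extend by continuity to the Sobolev spaces.

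The main obstacle is the careful bookkeeping in the Fa\`a di Bruno expansion paired with the iterative use of Hardy's inequality: each step of the iteration requires the current weight exponent to lie outside $\{jp-1:j\in\NN_1\}$, which precisely explains the exclusion of these exceptional values, while the lower bound $\gam>(k-(r+\kappa))p-1$ is exactly the threshold at which the most singular summand in the Fa\`a di Bruno expansion remains integrable against the target weight $w_\gam$.
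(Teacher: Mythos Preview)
The paper does not prove this proposition at all; it merely quotes it from \cite[Proposition 3.7]{LLRV25}. Your outline---Fa\`a di Bruno, the pointwise blow-up estimates for the Dahlberg--Kenig--Stein pullback, and Hardy's inequality to absorb the negative powers of the boundary distance---is precisely the natural strategy and is almost certainly what is carried out in \cite{LLRV25}.

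That said, one step in your sketch does not go through as written. When $\kappa\in(0,1)$ the exponent $\sigma=\sum_{i:|\gamma_i|>r}(r+\kappa-|\gamma_i|)$ is \emph{not} an integer, so you cannot ``iterate Hardy's inequality'' to pass exactly from the weight $\dist^{\gamma+\sigma p}$ to $\dist^{\gamma}$: each Hardy step shifts the exponent by $p$, and after an integer number of steps you are left with a residual mismatch. On the unbounded special domain this mismatch cannot be removed by a simple weight comparison, since $\dist(\cdot,\d\OO)$ is unbounded. The standard fix is to split into $\{\dist\leq 1\}$ and $\{\dist>1\}$: on the far part $|P_{\alpha,\beta}|\lesssim 1$ because $\sigma\leq 0$, so the term is controlled directly by $\|\d^\beta f\|_{L^p(w_\gamma^{\d\OO})}$; on the near part one uses that $\dist\leq 1$ together with your key inequality $|\beta|-\sigma\leq k$ to get $\dist^{\gamma+\sigma p}\leq \dist^{\gamma-(k-|\beta|)p}$, after which $k-|\beta|$ integer Hardy steps finish the estimate. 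You should also make explicit that Hardy's inequality is being applied on the special domain $\OO$ (which is standard for epigraphs of Lipschitz functions), since invoking the isomorphism $\Phi_*$ to transfer Hardy from $\RRdh$ would be circular.
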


Next, let $\OO$ be a bounded $C^{r,\kappa}$-domain with $r\in\NN_1$ and $\kappa\in[0,1]$. The following statements hold, see also \cite[Lemma 3.12]{LLRV25}.
\begin{enumerate}[(i)]
    \item There exists a finite open cover $(V_n)_{n=1}^N$ of $\d\OO$, together with special $\Cc^{r,\kappa}$-domains $(\OO_n)_{n=1}^N$, such that
      \begin{equation*}
        \mc{O}\cap V_n = \OO_n \cap V_n \qquad \text{and}\qquad \BDom \cap V_n = \partial \OO_n \cap V_n,\quad n\in\{1,\dots, N\}.
        \end{equation*}
        Without loss of generality, we may assume that each $V_n$ is a ball.
        \item There exist $\eta_0\in \Cc^{\infty}(\OO)$ and $\eta_n \in \Cc^{\infty}(V_n)$ for all $n\in\{1,\dots,N\}$ such that $\sum_{n=0}^{N}\eta_n^2=1$ on $\OO$ and $\sum_{n=1}^{N}\eta_n^2=1$ on $\d\OO$ (partition of unity).
\end{enumerate}
For $n\in \{1,\dots,N\}$, let $\Phi_n: \OO_n\to \RRdh$ be the Dahlberg--Kenig--Stein pullback as introduced above. For any $f$ defined on $\overline{\OO_n}$, the change of coordinate mapping is $(\Phi_n)_* f:= f\circ \Phi_n^{-1}$. Furthermore, for any $g$ defined on $\d\OO_n$, we write $(\Phi_n(0,\cdot))_* g = g (\Phi^{-1}_n (0, \cdot))$ which defines a function on $\RR^{d-1}$.
Throughout this section, we fix $(V_n)_{n=1}^N$, $(\OO_n)_{n=1}^N$, $(\eta_n)_{n=0}^N$ and $(\Phi_n)_{n=1}^N$ as introduced above.\\

Similar to \cite[Definition 3.6.1]{Tr78}, we define Besov spaces on the boundary.  
Let $p\in(1,\infty)$, $s>0$, $X$ a Banach space. Let $L^p(\d\OO;X)$ be the Lebesgue space with respect to the surface measure. Note that the norms $\|g\|_{L^p(\d\OO;X)}$ and 
\[ \sum_{n=1}^N\|(\Phi_n(0,\cdot))_*\eta_n^2 g\|_{L^p(\RR^{d-1};X)},\qquad g\in L^p(\d\OO;X),\]
are equivalent.
We define 
    \begin{align}\label{eq:def_Besov_dom}
        B^{s}_{p,q}(\d\OO;X):=
        \big\{g\in L^p(\d\OO;X): (\Phi_n(0,\cdot))_* \eta_n^2 g \in B^s_{p,q}(\RR^{d-1};X)\, \forall n\in \{1,\dots, N\}\big\},
    \end{align}
equipped with the norm
\begin{align*}
    \|g\|_{B^s_{p,q}(\d\OO;X)}&:=\sum_{n=1}^N\|(\Phi_n(0,\cdot))_*\eta_n^2 g\|_{B^s_{p,q}(\RR^{d-1};X)},\qquad g\in B^s_{p,q}(\d\OO;X).
\end{align*}
Note that this definition is independent of the partition of unity and the diffeomorphisms up to an equivalent norm. Moreover, we need the following spaces of smooth functions on the boundary. Let $L^1_{\loc}(\d\OO;X)$ be the Lebesgue space of locally integrable functions with respect to the surface measure. For $r\in \NN_1$, $\kappa\in[0,1)$ and $\OO$ a bounded $C^{r,\kappa}$-domain, we define
\begin{equation*}
    C^{r,\kappa}(\d\OO; X): = \big\{g\in L^1_{\loc}(\d\OO;X): (\Phi_n(0,\cdot))_* \eta_n^2 g \in C^{r,\kappa}(\RR^{d-1}; X)\, \forall n\in \{1,\dots, N\} \big\}.
\end{equation*}

\subsection{Traces of intersections of weighted Sobolev spaces on domains}\label{subsec:trace_dom}
We first define the spatial trace operator for smooth functions on domains. Let $r\in\NN_1$ and let $\OO$ be a special $\Cc^r$-domain or a bounded $C^r$-domain. Furthermore, the unit inner normal vector on $\d\OO$ is denoted by $\vec{n}$. For $u\in C^{r}(\overline{\OO};X)$ we define the trace
\begin{equation}\label{eq:def_trace_dom}
    \begin{aligned}
    \Tr_j^{\d\OO}u  &= \sum_{|\alpha|=j} (\d^\alpha u)|_{\d\OO} \,\vec{n}^\alpha,\qquad j\in\{0,\dots, r\},\\
     \overline{\Tr}_r^{\d\OO}u& =(\Tr_0^{\d\OO}u  , \dots, \Tr_r^{\d\OO}u ).
\end{aligned}
\end{equation}

The main trace result for intersections of weighted Sobolev spaces on domains reads as follows. The rest of this section is devoted to its proof.
\begin{theorem}\label{thm:trace_char_dom}
    Let $p,q\in(1,\infty)$, $k, m\in\NN_0$, $k_1,\ell, r\in \NN_1$ and $\kappa\in[0,1)$ such that $k_1\geq r+\kappa>m$ and $\gam\in ((k_1-(r+\kappa))p-1, (k_1-m)p-1)\setminus\{jp-1:j\in\NN_1\}$, $\mu\in(-1,q-1)$ and let $X$ be a $\UMD$ Banach space. Furthermore, let $I=\RR$ or $I=(0,T)$ for some $T\in(0,\infty]$ and let $\OO$ be a bounded $C^{r, \kappa}$-domain. Then the trace operator $\overline{\Tr}_m^{\d\OO}$
   from
  \begin{equation*}
    W^{\ell,q}(I, v_\mu; W^{k,p}(\OO, w^{\d\OO}_{\gam+kp};X))\cap L^q(I, v_\mu; W^{k+k_1,p}(\OO, w^{\d\OO}_{\gam+kp};X))
  \end{equation*}
  to 
  \begin{equation}\label{eq:tracespaceO}
    \prod_{j=0}^m F_{q,p}^{\ell - \frac{\ell}{k_1}(j+\frac{\gam+1}{p})}(I, v_\mu; L^p(\d\OO;X))\cap L^q(I, v_\mu; B_{p,p}^{k_1-j-\frac{\gam+1}{p}}(\d\OO;X)).
  \end{equation}
  is continuous and surjective. Moreover, there exists a continuous right inverse $\overline{\ext}_m^{\d\OO}$ of $\overline{\Tr}_m^{\d\OO}$.
\end{theorem}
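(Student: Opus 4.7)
\emph{The plan is to reduce Theorem \ref{thm:trace_char_dom} to the half-space result of Theorem \ref{thm:spatialtracespace}} via the partition of unity $(\eta_n)_{n=0}^N$ and the Dahlberg--Kenig--Stein pullbacks $(\Phi_n)_{n=1}^N$ fixed in Section \ref{subsec:loc}. Writing $u=\sum_{n=0}^N\eta_n^2u$, the interior piece $\eta_0^2u$ has zero boundary trace, so only the boundary pieces $\eta_n^2u$ for $n\in\{1,\dots,N\}$ require attention. Each such piece, extended by zero outside $V_n\cap \OO$, is supported in the special $C^{r,\kappa}$-domain $\OO_n$. Proposition \ref{prop:isom} applied to both Sobolev orders $k$ and $k+k_1$---the condition for order $k$ is automatic from $\gam>-1$, while for order $k+k_1$ it is exactly the standing hypothesis $\gam>(k_1-(r+\kappa))p-1$---shows that $v_n:=(\Phi_n)_*(\eta_n^2u)$ lies in the analogous weighted Sobolev intersection space on $\RRdh$, with $\|v_n\|\lesssim \|\eta_n^2u\|$. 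Theorem \ref{thm:spatialtracespace} then provides $\overline{\Tr}_m v_n$ in the half-space trace space, with the corresponding norm control.

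\emph{The key step is to relate the half-space trace} $\Tr_j v_n=\d_1^jv_n|_{\{0\}\times \RR^{d-1}}$ to the normal trace $\Tr_j^{\d\OO}(\eta_n^2u)\circ\Phi_n^{-1}(0,\cdot)$. Iterating the chain rule and exploiting that $\Phi_n$ is distance-preserving (so $\d_1\Phi_n^{-1}|_{y_1=0}=\n\circ \Phi_n^{-1}(0,\cdot)$), one obtains for smooth $u$ the triangular identity
\begin{equation*}
  \Tr_j v_n \;=\; \sum_{i=0}^{j} c^{(n)}_{j,i}\,\big(\Tr_i^{\d\OO}(\eta_n^2u)\big)\circ \Phi_n^{-1}(0,\cdot),\qquad c^{(n)}_{j,j}\equiv 1,
\end{equation*}
in which each $c^{(n)}_{j,i}$ is a polynomial in the partial derivatives of $\Phi_n^{-1}|_{\{0\}\times \RR^{d-1}}$ of total order at most $j-i$. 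Since $\Phi_n\in C^{r,\kappa}$ and $j\leq m\leq r$, the coefficients $c^{(n)}_{j,i}$ belong to $C^{r-j,\kappa}_\b(\RR^{d-1})$. The hypothesis $\gam>(k_1-(r+\kappa))p-1$ rearranges to $r-j+\kappa>k_1-j-\frac{\gam+1}{p}$, which is precisely the threshold for $c^{(n)}_{j,i}$ to act as a pointwise multiplier on the target Besov space $B^{k_1-j-\frac{\gam+1}{p}}_{p,p}(\RR^{d-1};X)$ and, trivially in the spatial variable, on the temporal Triebel--Lizorkin space $F^{\ell-\frac{\ell}{k_1}(j+\frac{\gam+1}{p})}_{q,p}(I,v_\mu;L^p(\RR^{d-1};X))$.

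\emph{Completing continuity.} Inverting the triangular system expresses $(\Tr_i^{\d\OO}(\eta_n^2u))\circ \Phi_n^{-1}(0,\cdot)$ as an analogous multiplier-combination of $\Tr_i v_n$ for $i\leq j$. Combined with the equivalent partition-of-unity norm on $B^s_{p,p}(\d\OO;X)$ from \eqref{eq:def_Besov_dom} and its temporal analogue, summation over $n$ yields continuity of $\overline{\Tr}_m^{\d\OO}$ between the asserted spaces. The passage from smooth $u$ to general $u$ in the solution space is obtained by density, itself standard from the weighted-Sobolev extension theory combined with mollification.

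\emph{The extension operator} $\overline{\ext}_m^{\d\OO}$ is built by reversing the procedure: given $g=(g_0,\ldots,g_m)$ in \eqref{eq:tracespaceO}, localise $g_j=\sum_n\eta_n^2g_j$, pull each $\eta_n^2g_j$ back to $\RR^{d-1}$ via $(\Phi_n(0,\cdot))_*$, apply the inverse of the triangular transformation above to obtain ``flat'' half-space data, invoke the extension $\overline{\ext}_m$ from Theorem \ref{thm:spatialtracespace}, push forward to $\OO_n$ by $(\Phi_n^{-1})_*$, cut off by a function supported in $V_n\cap \OO$, and sum over $n$. The bounded time interval case $I=(0,T)$ is reduced to $I=\RR$ via the universal temporal extension/restriction argument of Step 2 in the proof of Theorem \ref{thm:spatialtracespace}. \textbf{The main obstacle} is the trace correspondence of the second paragraph: verifying carefully that the H\"older regularity of $\Phi_n^{-1}$ at the boundary suffices for the chain-rule coefficients $c^{(n)}_{j,i}$ to act as pointwise multipliers on every anisotropic space in the trace product---this is precisely the balance struck by the hypothesis $\gam>(k_1-(r+\kappa))p-1$.
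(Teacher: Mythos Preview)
Your localisation scheme is the right one and matches the paper's, but the ``triangular identity'' in your second paragraph is not correct as stated, and this is the gap. The Dahlberg--Kenig--Stein pullback $\Phi_n$ preserves the \emph{distance} to the boundary only up to constants; it does \emph{not} send the $e_1$-direction on $\RRdh$ to the unit inner normal on $\d\OO_n$. For a special domain $\{x_1>h(\tilde x)\}$ the map has the form $\Phi_n^{-1}(y_1,\tilde y)=(\psi(y_1,\tilde y),\tilde y)$, so $\d_{y_1}\Phi_n^{-1}(0,\tilde y)$ points in the $e_1$-direction, whereas the inward normal is $(1,-\nabla h)/\sqrt{1+|\nabla h|^2}$. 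Consequently $\Tr_j v_n=\d_1^j v_n|_{y_1=0}$ is \emph{not} a multiplier-combination of the normal traces $\Tr_i^{\d\OO}(\eta_n^2u)\circ\Phi_n^{-1}(0,\cdot)$ for $i\le j$: the chain rule produces all mixed derivatives $\d^{\alpha}(\eta_n^2u)|_{\d\OO}$ with $|\alpha|\le j$, and rewriting these in terms of the $\Tr_i^{\d\OO}$ forces \emph{tangential differential operators} into the off-diagonal entries, not just functions acting as pointwise multipliers. The paper says this explicitly: ``the direction of the normal vector $\n$ is not preserved under the Dahlberg--Kenig--Stein pullback.''

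The paper handles exactly this point via an intermediate half-space result (Proposition~\ref{prop:trace_charN}). One first defines the non-tangential field $\mc{N}_n:=(D\Phi_n)\,\tilde{\n}\circ\Phi_n^{-1}$ on $\RRdh$, for which the chain-rule identity \eqref{eq:trace_v} gives $\Tr_j^{\d\OO}(\eta_n^2u)\circ\Phi_n^{-1}(0,\cdot)=(\d_{\mc{N}_n}^j v_n)|_{\d\RRdh}$ \emph{exactly}, and then proves a trace theorem for $\d_{\mc{N}_n}^j$ on $\RRdh$. The triangular system you envisage does appear, but on the half-space side, between $(\d_{\mc{N}_n}^j v_n)|_{\d\RRdh}$ and $(\d_1^j v_n)|_{\d\RRdh}$; its off-diagonal entries are tangential differential operators with $C^{r-j,\kappa}$ coefficients, and their boundedness between the successive trace spaces requires the anisotropic lifting estimate \eqref{eq:step1} (trading a tangential $\d^{\tilde\alpha}$ for a shift in the smoothness index), not merely pointwise-multiplier bounds. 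Your outline becomes correct if you reinterpret the $c^{(n)}_{j,i}$ as tangential differential operators and supply this estimate; as written, the pure pointwise-multiplier argument does not close.
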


First of all, it should be noted that $\Tr_1^{\d\OO}$ coincides with the normal derivative $\d_{\n}=\frac{\d}{\d \n}$, while for $j\geq 2$ the trace $\Tr_j^{\d\OO}$ is in general not equal to $\d_{\vec{n}}^j$, see \cite[Remark 2.9]{KimD07}. Nevertheless, one can construct a vector field $\tilde{\n}$ which does satisfy this property. Indeed, if $\OO$ is a special $\Cc^{r,\kappa}$-domain with  $r\geq 2$ and  $\kappa\in[0,1]$, then by \cite[Lemma 2.17]{KimD07} there exists an extension $\tilde{\vec{n}}\in C^{r-1,\kappa}(\OO)$ of $\vec{n}$ such that $\tilde{\vec{n}}=\vec{n}$ on $\d\OO$, which satisfies for $u\in C^{r,\kappa}(\OO;X)$
\begin{equation*}
    \frac{\d^j u}{\d\tilde{\vec{n}}^j} =\sum_{|\alpha|=j}(\d^\alpha u )\,\tilde{\n}^\alpha\qquad  \text{on }\d\OO, \, j\in\{1,\dots, r\}. 
\end{equation*}

To prove Theorem \ref{thm:trace_char_dom}, we use a covering of $\d\OO$ and a partition of unity to reduce to special domains $(\OO_n)_{n=1}^N$. Then, on each special domain, the Dahlberg--Kenig--Stein pullback $\Phi_n$ can be applied to reduce again to the half-space, where we have a trace theorem available. However, the direction of the normal vector $\n$ is not preserved under the Dahlberg--Kenig--Stein pullback. Therefore, we need a trace result on $\RRdh$ with a different non-tangential vector field $\mc{N}$ on the boundary. We first construct this vector field $\mc{N}$ that corresponds to $\n$ under the Dahlberg--Kenig--Stein pullback.  \\

Let  $\OO$ be a special $\Cc^{r,\kappa}$-domain with $r\in\NN_1$ and $\kappa\in [0,1]$. Furthermore, let $m<r+\kappa$ and let $\n$ be the unit inner normal vector on $\d\OO$. 
If $r=1$, set $\tilde{\n}(x)=\n(h(\tilde{x}), x_2, \dots, x_d)$ for $x\in \overline{\OO}$. If $r\geq 2$, then take $\tilde{\n}$ from \cite[Lemma 2.17]{KimD07} as explained above. Using the Dahlberg--Kenig--Stein pullback $\Phi:\OO\to\RRdh$ introduced in Section \ref{subsec:loc}, we define the vector field $\mc{N}\in C^{r-1,\kappa}(\RRdh)$ on $\overline{\RRdh}$ as 
\begin{equation}\label{eq:defmcN}
    \mc{N}(y) = (D\Phi)(\Phi^{-1}(y))\tilde{\n}(\Phi^{-1}(y)),\qquad y\in\overline{\RRdh},
\end{equation}
where $D\Phi$ denotes the Jacobian matrix of $\Phi$. 
Furthermore, $\mc{N}(0, \cdot)$ is a non-tangential vector field on $\d\RRdh=\RR^{d-1}$, which satisfies (see \cite[Equation (19)]{KimD07})  
\begin{equation}\label{eq:trace_v}
    \sum_{|\alpha|=j}(\d^\alpha u)(x)\n^\alpha(x) = \sum_{|\alpha|=j} (\d^\alpha u)(x)\tilde{\n}^\alpha(x) = \frac{\d^j v}{\d \mc{N}^j}(\Phi(x))\quad \text{on }\d\OO,
\end{equation}
for $j\in\{0,\dots, m\}$, $u\in C^{r,\kappa}(\overline{\OO};X)$, and $v(y)=u(\Phi^{-1}(y))$.\\

We will prove a variant of Theorem \ref{thm:spatialtracespace} on $\RRdh$ 
for the vector field $\mc{N}$ on the boundary as defined in \eqref{eq:defmcN}.
\begin{proposition}\label{prop:trace_charN}
    Let $p,q\in(1,\infty)$, $k, m\in\NN_0$, $k_1,\ell, r\in \NN_1$ and $\kappa\in[0,1)$ such that $k_1\geq r+\kappa>m$ and $\gam\in ((k_1-(r+\kappa))p-1, (k_1-m)p-1)\setminus\{jp-1:j\in\NN_1\}$. Furthermore, let $v\in A_{q}(\RR)$ and let $X$ be a $\UMD$ Banach space. Let $\mc{N}\in C^{r-1, \kappa}(\RRdh)$ be the vector field as in \eqref{eq:defmcN}. Then the trace operator 
    \begin{equation*}
        \overline{\Tr}_m^{\mc{N}}:=(\id, \d_\mc{N},\dots, \d_{\mc{N}}^m)|_{\d\RRdh}
    \end{equation*}
   from
  \begin{equation*}
    W^{\ell,q}(\RR, v; W^{k,p}(\RRdh, w_{\gam+kp};X))\cap L^q(\RR, v; W^{k+k_1,p}(\RRdh, w_{\gam+kp};X))
  \end{equation*}
  to 
  \begin{equation}\label{eq:tracespaceN}
    \prod_{j=0}^m F_{q,p}^{\ell - \frac{\ell}{k_1}(j+\frac{\gam+1}{p})}(\RR, v; L^p(\RR^{d-1};X))\cap L^q(\RR, v; B_{p,p}^{k_1-j-\frac{\gam+1}{p}}(\RR^{d-1};X)).
  \end{equation}
  is continuous and surjective. Moreover, there exists a continuous right inverse $\overline{\ext}_m^{\mc{N}}$ of $\overline{\Tr}_m^{\mc{N}}$.
\end{proposition}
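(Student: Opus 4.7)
The plan is to reduce Proposition \ref{prop:trace_charN} to Theorem \ref{thm:spatialtracespace} by expressing the directional trace $\overline{\Tr}_m^{\mc{N}}$ as a continuous invertible perturbation of the normal trace $\overline{\Tr}_m$. Concretely, I would first show that for $u\in C^{k+k_1}(\overline{\RRdh};X)$ and each $j\in\{0,\ldots,m\}$,
\begin{equation}\label{eq:plan_triang}
  \Tr_j^{\mc{N}} u = \mc{N}_1(0,\cdot)^j\,\Tr_j u + \sum_{i=0}^{j-1} L_{j,i}\,\Tr_i u,
\end{equation}
where $\mc{N}_1$ denotes the first component of $\mc{N}$ and $L_{j,i}$ is a tangential differential operator of order $j-i$ whose coefficients are polynomial expressions in the components of $\mc{N}(0,\cdot)$ and their tangential derivatives up to order $j-1-i$. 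This identity is obtained by induction on $j$: expanding $\d_{\mc{N}} = \mc{N}_1 \d_1 + \sum_{k=2}^d \mc{N}_k \d_k$, iterating, restricting to $\d\RRdh$, and reorganising the resulting terms so that each contribution is a tangential operator acting on some $\Tr_i u$ with $i\leq j$.

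\textbf{Mapping properties.} Since $\mc{N}\in C^{r-1,\kappa}(\RRdh)$, its restriction to the boundary lies in $C^{r-1,\kappa}(\RR^{d-1})$; moreover, by \eqref{eq:defmcN} combined with the distance-preserving property of the Dahlberg--Kenig--Stein pullback, $\mc{N}_1(0,\cdot)$ is bounded away from zero on $\RR^{d-1}$, so $\mc{N}_1(0,\cdot)^{\pm j}$ also belongs to $C^{r-1,\kappa}$. Writing $s_j := k_1-j-\frac{\gam+1}{p}$ for the boundary Besov smoothness at level $j$, the lower bound $\gam > (k_1-(r+\kappa))p-1$ translates into $s_j < r+\kappa - j$, while $L_{j,i}$ has coefficients in $C^{r-j+i,\kappa}$ and order $j-i$, for which $r-j+i+\kappa > s_j$ holds. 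This places us in the range where standard pointwise multiplier theorems for Besov and Triebel--Lizorkin spaces apply; together with the intersection representation of Theorem \ref{thm:TL_intersection_repr}, this yields continuity of each $L_{j,i}$ and of the multiplications by $\mc{N}_1(0,\cdot)^{\pm j}$ on the trace space \eqref{eq:tracespaceN}.

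\textbf{Conclusion and main obstacle.} With \eqref{eq:plan_triang} and the multiplier bounds in hand, continuity of $\overline{\Tr}_m^{\mc{N}}$ on the bulk intersection space follows from Theorem \ref{thm:spatialtracespace} by density of smooth functions. For surjectivity and the construction of the extension operator, I would invert the triangular system \eqref{eq:plan_triang} recursively: given $(h_0,\ldots,h_m)$ in the target space, set $g_j := \mc{N}_1(0,\cdot)^{-j}\bigl(h_j - \sum_{i=0}^{j-1} L_{j,i}\, g_i\bigr)$, which produces $(g_0,\ldots,g_m)$ continuously in the same product space, and then define $\overline{\ext}_m^{\mc{N}}(h_0,\ldots,h_m):=\overline{\ext}_m(g_0,\ldots,g_m)$ using Theorem \ref{thm:spatialtracespace}. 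The main obstacle is precisely the multiplier step: verifying that the low-regularity coefficients, which lie only in $C^{r-1,\kappa}$, act boundedly on the anisotropic intersection spaces at the boundary scale. The parameter window imposed on $\gam$ in the proposition is exactly what is needed to close this argument, so the analysis should come out sharp.
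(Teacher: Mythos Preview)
Your approach is correct and coincides with the paper's proof: the paper likewise expands $\d_{\mc{N}}^j$ via the Leibniz rule, checks that the resulting coefficients are pointwise multipliers on the boundary Besov scale (using $r-j+\kappa > k_1-j-\frac{\gam+1}{p}$, which is precisely the lower bound on $\gam$), handles the tangential derivatives through the anisotropic intersection representation of Theorem~\ref{thm:TL_intersection_repr}, and then defines $\overline{\ext}_m^{\mc{N}}:=\overline{\ext}_m\circ I^{-1}$ for the lower-triangular matrix $I$ relating $(\Tr_j^{\mc{N}}u)_j$ to $(\Tr_j u)_j$. One minor correction to your bookkeeping: the coefficients of $L_{j,i}$ involve not only tangential but also normal derivatives of $\mc{N}$ evaluated at $x_1=0$, and their order can reach $j-1$ rather than $j-1-i$ (e.g.\ the term $\mc{N}_k(\d_k\mc{N}_1)\,\d_1 u$ in $\d_{\mc{N}}^2 u$ contributes to $L_{2,1}$ with a first-order derivative of $\mc{N}$), so the coefficients lie only in $C^{r-j,\kappa}(\RR^{d-1})$; this weaker regularity still suffices for the multiplier step under the stated condition on~$\gam$, so the argument goes through unchanged.
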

\begin{remark}\hspace{2em}
\begin{enumerate}[(i)]
    \item In addition to the conditions in Theorem \ref{thm:spatialtracespace}, there is a more restrictive lower bound on $\gam$ to ensure that the diffeomorphism $\Phi$ has the required mapping properties.
    \item More generally, the proposition actually holds for any non-tangential vector field $\mc{N}$ on $\d\RRdh=\RR^{d-1}$. We will only need the result for the vector field $\mc{N}$ obtained after applying the pullback $\Phi$ to the normal vector $\tilde{\vec{n}}$ at $\d\OO$, see \eqref{eq:defmcN}.
    \item Due to the low regularity of the domain $\OO$ and the vector field $\mc{N}$, we cannot apply the argument as given in \cite[Remark 3.6.1/3]{Tr78} to obtain the result of Proposition \ref{prop:trace_charN}.
\end{enumerate}
\end{remark}
\begin{proof}
\textit{Step 1: a preliminary estimate.} Let $\tilde{\alpha}\in \NN_0^{d-1}$ and $j\in\{0,\dots, m\}$. Then by Theorem \ref{thm:TL_intersection_repr} twice and the fact that the (spatial) $\d^{\tilde{\alpha}}$-derivatives are of order $\frac{|\tilde{\alpha}|}{k_1}$ in the scale of anisotropic Triebel--Lizorkin spaces (e.g., \cite[Proposition 5.2.29]{Li14}), we obtain the estimate
\begin{equation}\label{eq:step1}
\begin{aligned}
        \|\d^{\tilde{\alpha}}g\|&_{F_{q,p}^{\ell-\frac{\ell}{k_1}(j+\frac{\gam+1}{p})}(\RR, v; L^p(\RR^{d-1};X))\cap L^q(\RR, v; B_{p,p}^{k_1-j-\frac{\gam+1}{p}}(\RR^{d-1};X))}\\
        \lesssim & \; \|\d^{\tilde{\alpha}}g\|_{F^{1-\frac{1}{k_1}(j+\frac{\gam+1}{p}), (\frac{1}{k_1}, \frac{1}{\ell})}_{(p,q), p, (d-1, 1)}(\RRd, (1, v);X)}\\
    \lesssim & \; \|g\|_{F^{1-\frac{1}{k_1}(j-|\tilde{\alpha}|+\frac{\gam+1}{p}), (\frac{1}{k_1}, \frac{1}{\ell})}_{(p,q), p, (d-1, 1)}(\RRd, (1, v);X)}\\
        \lesssim &\; \|g\|_{F^{\ell-\frac{\ell}{k_1}(j-|\tilde{\alpha}|+\frac{\gam+1}{p})}_{q,p}(\RR, v; L^p(\RR^{d-1};X))\cap L^q(\RR, v; B^{k_1-j+|\tilde{\alpha}|-\frac{\gam+1}{p}}_{p,p}(\RR^{d-1};X))},
\end{aligned}
\end{equation}
for all $g$ such that the right-hand side is finite.

\textit{Step 2: the trace operator.} We prove the continuity of the trace. By density it suffices to take $u\in \Cc^{\infty}(\RR; \Cc^{\infty}(\overline{\RRdh};X))$. Let $j\in\{0,\dots, m\}$ and let $\mc{N}\in C^{r-1,\kappa}(\RRdh)$ be the non-tangential vector field from \eqref{eq:defmcN}, then by the Leibniz rule we obtain that $\d_{\mc{N}}^j u=(\mc{N}\cdot \grad )^ju$ is a linear combination of terms of the form
  \begin{equation*}
      (\d^\alpha u)\prod_{i=1}^j(\d^{\beta_i}\mc{N}_{r_i})\quad\text{ with } 0\leq \sum_{i=1}^j|\beta_i|\leq j-1 \text{ and } |\alpha|+\sum_{i=1}^j|\beta_i|=j,
  \end{equation*}
  where $\alpha=(\alpha_1, \tilde{\alpha})\in \NN_0\times \NN_0^{d-1}$ satisfies $1\leq |\alpha|\leq j$ and $r_i\in \{1,\dots, d\}$ for all $i\in\{1,\dots, j\}$.
  Moreover, it  holds that $(\d^{\beta_i}\mc{N}_{r_i})(0,\cdot)\in C^{r-j,\kappa}(\RR^{d-1})$ for all $i\in\{1,\dots, j\}$. In particular, it follows that  $(\d^{\beta_i}\mc{N}_{r_i})(0,\cdot)$ are pointwise multipliers on $L^p(\RR^{d-1};X)$ and $B_{p,p}^{k_1-j-\frac{\gam+1}{p}}(\RR^{d-1};X)$ by \cite[Proposition 5.4]{MV15} (using that $\kappa\in[0,1)$) since $r-j+\kappa > k_1-j - \frac{\gam+1}{p}$.
  This, together with \eqref{eq:step1} and Theorem \ref{thm:spatialtracespace}, gives
  \begin{align*}
      \|\d_\mc{N}^j u& |_{\d\RRdh}\|_{F_{q,p}^{\ell - \frac{\ell}{k_1}(j+\frac{\gam+1}{p})}(\RR, v; L^p(\RR^{d-1};X))\cap L^q(\RR, v; B_{p,p}^{k_1-j-\frac{\gam+1}{p}}(\RR^{d-1};X))}\\
      &\lesssim \sum_{|\alpha|\leq j} \|\d^{\tilde{\alpha}}(\d_1^{\alpha_1} u)(0,\cdot)\|_{F_{q,p}^{\ell - \frac{\ell}{k_1}(j+\frac{\gam+1}{p})}(\RR, v; L^p(\RR^{d-1};X))\cap L^q(\RR, v; B_{p,p}^{k_1-j-\frac{\gam+1}{p}}(\RR^{d-1};X))}\\
      &\lesssim \sum_{|\alpha|\leq j} \|(\d_1^{\alpha_1} u)(0,\cdot)\|_{F_{q,p}^{\ell - \frac{\ell}{k_1}(\alpha_1+\frac{\gam+1}{p})}(\RR, v; L^p(\RR^{d-1};X))\cap L^q(\RR, v; B_{p,p}^{k_1-\alpha_1-\frac{\gam+1}{p}}(\RR^{d-1};X))}\\
      &\lesssim \|u\|_{W^{\ell,q}(\RR, v; W^{k,p}(\RRdh, w_{\gam+kp};X))\cap L^q(\RR, v; W^{k+k_1,p}(\RRdh, w_{\gam+kp};X))},
  \end{align*}
  where we have used that $|\tilde{\alpha}|-j = |\alpha|-\alpha_1-j \leq -\alpha_1$.

  \textit{Step 3: the extension operator.} We first prove that there exists an isomorphism $I$ which is bounded on the trace space \eqref{eq:tracespaceN} and relates $\d_{\mc{N}}$-derivatives to $\d_1$-derivatives. Let $j\in\{0,\dots, m\}$, $u\in \Cc^{\infty}(\RR; \Cc^{\infty}(\overline{\RRdh};X))$ and define
  \begin{equation*}
      h_j:=(\d_1^j u)(0,\cdot)\quad \text{ and }\quad g_j:=(\d^j_{\mc{N}}u)(0,\cdot).
  \end{equation*}
  Again, by the Leibniz rule $g_j$ can be written as a sum of $N_1^j h_j$ and terms of the form $(\d^{\tilde{\alpha}}h_{\alpha_1})\prod_{i=1}^j(\d^{\beta_i}\mc{N}_{r_i})(0,\cdot)$ where $|\alpha|\leq j$ with $\alpha_1\leq j-1$ and $|\alpha|+ \sum_{i=1}^j|\beta_i|=j$. Hence, there exists a lower triangular $(m+1)\times (m+1)$ matrix $I$ with diagonal entries $I_{j,j}= \mc{N}^j_1$ such that $(g_0, \dots, g_m)^\top = I (h_0, \dots, h_m)^\top$. From the estimate in Step 2, it follows that $I$ is bounded on the trace space \eqref{eq:tracespaceN}.
  The invertibility of $I$ follows from the fact that $\mc{N}$ is non-tangential, i.e., $\mc{N}_1\geq c>0$ on $\RR^{d-1}$. The boundedness of $I^{-1}$, which is again a lower triangular matrix, can be proved similarly to the boundedness of $I$ after recursively expressing the $h_j$ in terms of the $g_j$. 

  Let $\overline{\ext}_m$ be the extension operator from Theorem \ref{thm:spatialtracespace} which satisfies $\overline{\Tr}_m \circ \overline{\ext}_m = \id$. We claim that $\overline{\ext}_m^{\mc{N}}:= \overline{\ext}_m \circ I^{-1}$ is the desired extension operator. Indeed, the continuity follows from boundedness of $I^{-1}$ and Theorem \ref{thm:spatialtracespace}. Furthermore, it holds that
  \begin{equation*}
     \overline{\Tr}^{\mc{N}}_m \circ \overline{\ext}_m^{\mc{N}} = I\circ  \overline{\Tr}_m\circ \overline{\ext}_m\circ I^{-1} = \id.
  \end{equation*}
  This completes the proof.
\end{proof}
We can now prove our main trace theorem.
\begin{proof}[Proof of Theorem \ref{thm:trace_char_dom}]
We first prove the theorem for $I=\RR$ and in this case we can consider general Muckenhoupt weights $v\in A_q(\RR)$.

Let  $(V_n)_{n=1}^N$, $(\OO_n)_{n=1}^N$, $(\eta_n)_{n=0}^N$ and $(\Phi_n)_{n=1}^N$ be a finite covering of $\d\OO$ consisting of balls, the corresponding special $\Cc^{r, \kappa}$-domains, the partition of unity and the diffeomorphisms as introduced in Section \ref{subsec:loc}, respectively. Furthermore, let $\mc{N}_n$ be the non-tangential vector fields corresponding to $\Phi_n$ as constructed in \eqref{eq:defmcN}.

    \textit{Step 1: the trace operator.} We prove continuity of the trace. By density it suffices to take $u\in \Cc^\infty(\RR; \Cc^{r,\kappa}(\overline{\OO};X))$. Set $v_n: =(\Phi_n)_*\eta_n^2 u$ for $n\in\{1, \dots, N\}$. By Propositions \ref{prop:trace_charN} and \ref{prop:isom} (using that $\gam> (k_1-(r+\kappa))p-1$), we obtain for all $n\in\{1, \dots, N\}$ and $j\in\{0, \dots, m\}$ that
\begin{align*}
    \big\|(\d^j_{\mc{N}_n} v_n)(0,& \cdot)\big\|_{F_{q,p}^{\ell - \frac{\ell}{k_1}(j+\frac{\gam+1}{p})}(\RR, v; L^p(\RR^{d-1};X))\cap L^q(\RR, v; B_{p,p}^{k_1-j-\frac{\gam+1}{p}}(\RR^{d-1};X))}\\
    \lesssim &\;\|v_n\|_{W^{\ell,q}(\RR, v; W^{k,p}(\RRdh, w_{\gam+kp};X))\cap L^q(\RR, v; W^{k+k_1,p}(\RRdh, w_{\gam+kp};X))}\\
     =&\;\|(\Phi_n)_*\eta_n^2 u\|_{W^{\ell,q}(\RR, v; W^{k,p}(\RRdh, w_{\gam+kp};X))\cap L^q(\RR, v; W^{k+k_1,p}(\RRdh, w_{\gam+kp};X))}\\
    \lesssim &\;\|\eta_n^2 u\|_{W^{\ell,q}(\RR, v; W^{k,p}(\OO_n, w^{\d\OO_n}_{\gam+kp};X))\cap L^q(\RR, v; W^{k+k_1,p}(\OO_n, w^{\d\OO_n}_{\gam+kp};X))}\\
    \lesssim &\;\| u\|_{W^{\ell,q}(\RR, v; W^{k,p}(\OO, w^{\d\OO}_{\gam+kp};X))\cap L^q(\RR, v; W^{k+k_1,p}(\OO, w^{\d\OO}_{\gam+kp};X))},
\end{align*}
where in the last step we have used that $\eta_n\in \Cc^\infty(V_n)$, so that the weight $w_{\gam+kp}^{\d\OO_n}$ can be replaced by $w_{\gam+kp}^{\d\OO}$, see also \cite[Lemma 3.12(iii)]{LLRV25}.
    Furthermore, by \eqref{eq:trace_v} we have
    \begin{equation*}
        (\d^j_{\mc{N}_n} v_n)(0, \tilde{y}) =\sum_{|\alpha|=j}  (\d^\alpha \eta_n^2 u )(\Phi_n^{-1}(0,\tilde{y})) \n^{\alpha}(\Phi_n^{-1}(0,\tilde{y})),\qquad \tilde{y}\in \RR^{d-1}.
    \end{equation*}
    Hence, by definition of the trace operator \eqref{eq:def_trace_dom} and the spaces on the boundary \eqref{eq:def_Besov_dom}, this proves the desired estimate.

    \textit{Step 2: the extension operator.} Let $g=(g_0, \dots, g_m)$ be a function in the trace space \eqref{eq:tracespaceO}. For all $n\in\{1,\dots, N\}$ and $j\in \{0,\dots, m\}$, set $h_{n,j}:=(\eta_n^2g_j)(\Phi_n^{-1}(0,\cdot))$ and $h_n:=(h_{n,0}, \dots, h_{n,m})$. It holds that
    \begin{equation*}
        h_{n,j}\in F_{q,p}^{\ell - \frac{\ell}{k_1}(j+\frac{\gam+1}{p})}(\RR, v; L^p(\RR^{d-1};X))\cap L^q(\RR, v; B_{p,p}^{k_1-j-\frac{\gam+1}{p}}(\RR^{d-1};X)).
    \end{equation*}
    Furthermore, let $\overline{\ext}_m^{\mc{N}_n}$ be the extension operator from Proposition \ref{prop:trace_charN} and set $v_n:= \overline{\ext}_m^{\mc{N}_n}h_n$. Proposition \ref{prop:trace_charN} implies that
    \begin{equation}\label{eq:est_ext_cont}
            \begin{aligned}
        \|v_n&\|_{W^{\ell,q}(\RR, v; W^{k,p}(\RRdh, w_{\gam+kp};X))\cap L^q(\RR, v; W^{k+k_1,p}(\RRdh, w_{\gam+kp};X))}\\
        &\lesssim \sum_{j=0}^m\|h_{n,j}\|_{F_{q,p}^{\ell - \frac{\ell}{k_1}(j+\frac{\gam+1}{p})}(\RR, v; L^p(\RR^{d-1};X))\cap L^q(\RR, v; B_{p,p}^{k_1-j-\frac{\gam+1}{p}}(\RR^{d-1};X))}\\
        &\lesssim \sum_{j=0}^m\|g_j\|_{F_{q,p}^{\ell - \frac{\ell}{k_1}(j+\frac{\gam+1}{p})}(\RR, v; L^p(\d\OO;X))\cap L^q(\RR, v; B_{p,p}^{k_1-j-\frac{\gam+1}{p}}(\d\OO;X))},
    \end{aligned}
    \end{equation}
     and $(\d_{\mc{N}_n}^j v_n)|_{\d\RRdh}=h_{n,j}$. Additionally, for $n\in \{1, \dots, N\}$ we define the cut-off function $\zeta_n\in \Cc^\infty(\RRd)$ such that $\supp \zeta_n\subseteq V_n$ and $\zeta_n (x)=1$ if $x\in B_n$, where $B_n$ is a ball which is compactly contained in $V_n$ and satisfies $\supp \eta_n\subseteq B_n$. We claim that
     \begin{equation}\label{eq:def_ext}
         \overline{\ext}_m^{\d\OO}g(x):=u(x):=\sum_{n=1}^N \zeta_n(x)v_n(\Phi_n(x)), \qquad x\in \OO, 
     \end{equation}
     defines the desired extension operator. First, note that we may assume that $v_n(y)=0$ if $y\in \overline{\RRdh}\setminus \Phi_n(\OO\cap B_n)$. In particular, $v_n(y)=0$ if $\zeta_n(\Phi^{-1}_n(y))\neq 1$. This, together with \eqref{eq:trace_v} and the properties of $h_n$, yields
\begin{align*}
     \sum_{|\alpha|=j}(\d^\alpha u) (x)\n^\alpha(x)
    &=\sum_{n=1}^N\zeta_n(x)\sum_{|\alpha|=j} \d^\alpha (v_n(\Phi_n(x)))\n^\alpha(x)\\
    &= \sum_{n=1}^N \zeta_n(x)\frac{\d^j v_n}{\d\mc{N}_n}(\Phi(x)) 
    =\sum_{n=1}^N \eta_n^2 g_j = g_j,\qquad x\in \d\OO, \,j\in\{0,\dots, m\}.
\end{align*}
     It remains to prove the continuity of the extension operator. With \eqref{eq:def_ext} and Proposition \ref{prop:isom} (using that $\gam> (k_1-(r+\kappa))p-1$), we obtain
     \begin{align*}
         \|\overline{\ext}_m^{\d\OO}g&\|_{W^{\ell,q}(\RR, v; W^{k,p}(\OO, w^{\d\OO}_{\gam+kp};X))\cap L^q(\RR, v; W^{k+k_1,p}(\OO, w^{\d\OO}_{\gam+kp};X))}\\
         &\lesssim\sum_{n=1}^N \|(\Phi_n^{-1})_*v_n\|_{W^{\ell,q}(\RR, v; W^{k,p}(\OO_n, w^{\d\OO_n}_{\gam+kp};X))\cap L^q(\RR, v; W^{k+k_1,p}(\OO_n, w^{\d\OO_n}_{\gam+kp};X))}\\
         &\lesssim\sum_{n=1}^N \|v_n\|_{W^{\ell,q}(\RR, v; W^{k,p}(\RRdh, w_{\gam+kp};X))\cap L^q(\RR, v; W^{k+k_1,p}(\RRdh, w_{\gam+kp};X))}
     \end{align*}
     and the required estimate follows using \eqref{eq:est_ext_cont}. This completes the proof for $I=\RR$.

     Finally, for $I=(0,T)$ with $T\in (0,\infty]$ the results follow similar to Step 2 in the proof of Theorem \ref{thm:spatialtracespace}. In this case, we should work with $\Cc^\infty(\overline{I}; \Cc^{r,\kappa}(\overline{\OO};X))$ and $\Cc^\infty(\overline{I}; C^{r,\kappa}(\d\OO;X))$ as dense subsets, since the spaces $B^{s}_{p,q}(\d\OO;X)$ and $C^{r,\kappa}(\d\OO;X)$ (see Section \ref{subsec:loc}) are defined using $C^{r,\kappa}$-diffeomorphisms.
\end{proof}

\section{The heat equation on rough domains with rough boundary data}\label{sec:heateq}
In this section, we prove well-posedness and higher-order spatial regularity for the heat equation on rough domains with rough boundary data. This extends the results in \cite[Corollary 6.7]{LLRV25} (homogeneous boundary data) and \cite[Section 7.3]{LV18} (bounded $C^2$-domains and base regularity). We study the initial boundary value problem given by
\begin{equation}\label{eq:heateq_R+}
\left\{
  \begin{aligned}
  &\d_t u -\del u = f\qquad &(t,x)&\in \RR_+\times \OO,\\
   &\Tr^{\d\OO}u= g\qquad &(t,x)&\in \RR_+\times \d\OO,\\
   &u(0)=0&x&\in\OO.
\end{aligned}\right.
\end{equation}
We study this problem with spatial regularity in weighted Sobolev spaces $W^{k,p}(\OO, w_{\gam}^{\d\OO})$, where $k\in\NN_0$ and
\begin{enumerate}[(i)]
    \item $\OO$ is a bounded $C^1$-domain if $\gam\in(p-1,2p-1)$, or,
    \item $\OO$ is a bounded $C^{1,\kappa}$-domain with $\kappa\in (0,1)$ if $\gam\in((1-\kappa)p-1, p-1)$.
\end{enumerate}
This setting is similar to \cite{LLRV25}. For $p,q\in (1,\infty)$, $k\in \NN_0$, $\gam\in (-1,2p-1)\setminus\{p-1\}$, $\mu\in(-1,q-1)$, $v_\mu(t)=|t|^\mu$ and $\OO$ as above, we define 
\begin{align*}
\F^{k}_{q,p}(\RR_+, v_\mu;\OO, w_{\gam+kp}^{\d\OO} )&:=L^q(\RR_+, v_\mu; W^{k,p}(\OO, w_{\gam+kp}^{\d\OO})),\\
   \U^{k}_{q,p}(\RR_+, v_\mu; \OO, w_{\gam+kp}^{\d\OO})&:= W^{1,q}(\RR_+, v_\mu; W^{k,p}(\OO, w_{\gam+kp}^{\d\OO})) \cap L^q(\RR_+, v_\mu; W^{k+2,p}(\OO, w_{\gam+kp}^{\d\OO})),\\
   \G_{q,p}(\RR_+, v_\mu; \d\OO)&:= F^{1-\frac{\gam+1}{2p}}_{q,p}(\RR_+, v_\mu; L^p(\d\OO))\cap L^q(\RR_+, v_\mu; B_{p,p}^{2-\frac{\gam+1}{p}}(\d\OO)).
\end{align*}
Moreover, we introduce
\begin{align*}
    \prescript{}{0}{\G}_{q,p}(\RR_+, v_\mu; \d\OO)&:=\begin{cases}
        \G_{q,p}(\RR_+, v_\mu; \d\OO)&\mbox{ if }1-\frac{\mu+1}{q}<\frac{\gam+1}{2p},\\
        \{g\in \G_{q,p}(\RR_+, v_\mu; \d\OO): g(0)=0\}&\mbox{ if }1-\frac{\mu+1}{q}>\frac{\gam+1}{2p},
    \end{cases}\\
    \prescript{}{0}{\U}^{k}_{q,p}(\RR_+, v_\mu; \OO, w_{\gam+kp}^{\d\OO})& := \{u\in \U^{k}_{q,p}(\RR_+, v_\mu; \OO, w_{\gam+kp}^{\d\OO}): u(0)=0\}.
\end{align*}
The zeroth-order temporal traces in the above spaces are well defined, see, e.g., \cite[Theorem 1.2]{Ro25}.\\

We obtain the following maximal regularity result for \eqref{eq:heateq_R+}.
\begin{theorem}\label{thm:heateq_0T}
    Let $p,q\in (1,\infty)$, $k\in\NN_0$, $\mu\in(-1,q-1)$, $\kappa\in[0,1)$, $\gam\in((1-\kappa)p-1,2p-1)\setminus\{p-1\}$ with $1-\frac{\mu+1}{q}\neq \frac{\gam+1}{2p}$. Furthermore, let $\OO$ be a bounded $C^{1,\kappa}$-domain. Then for all
    \begin{equation*}
        f\in \F^{k}_{q,p}(\RR_+, v_\mu; \OO, w_{\gam+kp}^{\d\OO})\quad \text{ and }\quad g\in \prescript{}{0}{\G}_{q,p}(\RR_+, v_\mu; \d\OO),
    \end{equation*}
   there exists a unique solution $u\in \prescript{}{0}{\U}^{k}_{q,p}(\RR_+, v_\mu; \OO, w_{\gam+kp}^{\d\OO})$ to \eqref{eq:heateq_R+}. Moreover, this solution satisfies
   \begin{equation*}
       \|u\|_{\U^{k}_{q,p}(\RR_+, v_\mu; \OO, w_{\gam+kp}^{\d\OO})}\leq C\big(\|f\|_{\F^{k}_{q,p}(\RR_+, v_\mu; \OO, w_{\gam+kp}^{\d\OO})} + \|g\|_{\G_{q,p}(\RR_+, v_\mu; \d\OO)}\big),
   \end{equation*}
   where the constant $C>0$ only depends on $p,q,k,\mu, \kappa, \gam$, and $d$.
\end{theorem}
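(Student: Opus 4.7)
The plan is to decompose $u = u_1 + u_2$, where $u_1$ carries the inhomogeneous boundary data $g$ and $u_2$ solves the reduced problem with homogeneous boundary conditions, for which maximal regularity is already available.

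\emph{Step 1 (Extension of the boundary data).} Apply Theorem \ref{thm:trace_char_dom} with $k_1 = 2$, $\ell = 1$, $m = 0$, $r = 1$, which is permissible since $\gam \in ((1-\kappa)p-1,2p-1)\setminus\{p-1\}$ matches the range $(k_1-(r+\kappa))p-1 < \gam < (k_1-m)p-1$ and avoids $\{jp-1:j\in\NN_1\}$. This yields a continuous right inverse $\ext^{\d\OO}$ of $\Tr^{\d\OO}$ from $\G_{q,p}(\RR_+, v_\mu; \d\OO)$ into $\U^{k}_{q,p}(\RR_+, v_\mu; \OO, w_{\gam+kp}^{\d\OO})$. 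Set $u_1 := \ext^{\d\OO} g$, so that $\Tr^{\d\OO} u_1 = g$ and $\|u_1\|_{\U^{k}_{q,p}} \lesssim \|g\|_{\G_{q,p}}$.

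\emph{Step 2 (Vanishing initial value of $u_1$).} In order to have $u(0)=0$ via $u_2\in \prescript{}{0}{\U}^{k}_{q,p}$, we need $u_1(0)=0$. If $1-\frac{\mu+1}{q} > \frac{\gam+1}{2p}$, the hypothesis $g\in \prescript{}{0}{\G}_{q,p}$ forces $g(0)=0$, and inspecting the construction of $\ext^{\d\OO}$ in Theorem \ref{thm:spatialtracespace} (the relevant extension acts by tangential Littlewood--Paley convolution and pointwise multiplication by fixed profiles in the normal direction, with the localisation of Section \ref{sec:trace_dom} preserving the temporal trace) gives $u_1(0)=0$. If $1-\frac{\mu+1}{q} < \frac{\gam+1}{2p}$, the temporal trace of $g$ is undefined, but $u_1(0)$ lies in a real interpolation space at a regularity below the spatial trace threshold on $\d\OO$; in that regime one can subtract from $u_1$ a correction in $\U^{k}_{q,p}$ with zero spatial boundary trace and prescribed initial value (produced, for instance, via the analytic semigroup of the Dirichlet Laplacian of \cite[Section 6.1]{LLRV25} acting on a lift of $u_1(0)$), thereby achieving $u_1(0) = 0$ without altering $\Tr^{\d\OO} u_1 = g$.

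\emph{Step 3 (Reduction and homogeneous maximal regularity).} Define
\begin{equation*}
\tilde f := f - \d_t u_1 + \del u_1 \in \F^{k}_{q,p}(\RR_+, v_\mu; \OO, w_{\gam+kp}^{\d\OO}),
\end{equation*}
with $\|\tilde f\|_{\F^{k}_{q,p}} \lesssim \|f\|_{\F^{k}_{q,p}} + \|g\|_{\G_{q,p}}$. Apply the maximal $L^q_{v_\mu}$-regularity result for the Dirichlet Laplacian on the weighted scale $W^{k,p}(\OO,w^{\d\OO}_{\gam+kp})$ from \cite[Corollary 6.7]{LLRV25}, whose range of $\gam$, $\kappa$ and $k$ coincides with the one of the present theorem, to obtain a unique $u_2 \in \prescript{}{0}{\U}^{k}_{q,p}$ with
\begin{equation*}
  \d_t u_2 - \del u_2 = \tilde f, \qquad \Tr^{\d\OO} u_2 = 0, \qquad u_2(0) = 0,
\end{equation*}
and $\|u_2\|_{\U^{k}_{q,p}} \lesssim \|\tilde f\|_{\F^{k}_{q,p}}$. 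Then $u := u_1 + u_2$ solves \eqref{eq:heateq_R+} and the stated estimate follows by chaining the bounds of Steps 1--3. Uniqueness is immediate: any two solutions differ by an element of $\prescript{}{0}{\U}^{k}_{q,p}$ solving the homogeneous problem with zero data, hence vanishes by the uniqueness in \cite[Corollary 6.7]{LLRV25}.

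The principal obstacle is Step 2, where the temporal and spatial traces of $u_1$ must be simultaneously controlled. In the subcritical case this is a direct consequence of the structure of the extension operator produced in Sections \ref{sec:trace_ani}--\ref{sec:trace_dom}; in the supercritical case the freedom to correct $u_1(0)$ without disturbing the spatial boundary trace reflects the fact that the spatial trace is not bounded at the relevant interpolation regularity, so the kernel of $\Tr^{\d\OO}$ is large enough to contain the necessary adjustment.
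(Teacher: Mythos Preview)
Your overall strategy—lift $g$ via the extension from Theorem \ref{thm:trace_char_dom}, reduce to the homogeneous problem, invoke \cite[Corollary 6.7]{LLRV25} for existence, estimate and uniqueness—is exactly the paper's. The only divergence is your Step 2, and there the argument has a real gap.

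In the case $1-\frac{\mu+1}{q}>\frac{\gam+1}{2p}$ you claim that $u_1(0)=0$ follows by inspecting the extension operator, which you describe as ``tangential Littlewood--Paley convolution and pointwise multiplication by fixed profiles in the normal direction''. That description is inaccurate: in Theorem \ref{thm:trm_ani_TL} (and hence in Theorems \ref{thm:spatialtracespace} and \ref{thm:trace_char_dom}) the resolution $(\phi_n)\in\Phi^{\pzcd'',\vec{a}''}$ is anisotropic \emph{jointly in the tangential spatial variables and in time}, so $\phi_n\ast g$ is a genuine space--time convolution. There is no reason why $g(0)=0$ should force $(\phi_n\ast g)(\cdot,0)=0$, and the further composition with a reflection-type temporal extension $E$ (Step 2 of Theorem \ref{thm:spatialtracespace}) does not help. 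In the case $1-\frac{\mu+1}{q}<\frac{\gam+1}{2p}$ your semigroup correction is plausible in outline, but it requires that $u_1(0)$ already lie in the real interpolation space \emph{with} Dirichlet boundary conditions; that identification in the weighted scale is exactly what the paper flags as delicate in Remark \ref{rem:heateq}(i), and you have not supplied it.

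The paper sidesteps both difficulties in one stroke: it replaces the universal temporal extension $E$ in Step 2 of Theorem \ref{thm:spatialtracespace} by the \emph{zero extension} $E_0$, which is bounded from $\prescript{}{0}{\G}_{q,p}(\RR_+,v_\mu;\d\OO)$ to $\G_{q,p}(\RR,v_\mu;\d\OO)$ (and similarly on $\prescript{}{0}{\U}^k_{q,p}$). Arguing as in that step then yields directly a bounded right inverse $\ext^{\d\OO}:\prescript{}{0}{\G}_{q,p}(\RR_+,v_\mu;\d\OO)\to\prescript{}{0}{\U}^k_{q,p}(\RR_+,v_\mu;\OO,w^{\d\OO}_{\gam+kp})$, with no case distinction and no need to look inside the spatial extension. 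After that your Steps 1 and 3 go through verbatim.
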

\begin{proof}
    From Theorem \ref{thm:trace_char_dom} it follows that 
    \begin{equation*}
        \Tr^{\d\OO}: \U^{k}_{q,p}(\RR, v_\mu; \OO, w_{\gam+kp}^{\d\OO})\to \G_{q,p}(\RR, v_\mu; \d\OO)
    \end{equation*}
    is continuous and surjective. By using the zero extension operator from $\RR_+$ to $\RR$, we can argue similarly as in Step 2 of Theorem \ref{thm:spatialtracespace} to obtain that
    \begin{equation}\label{eq:heateq_trace}
        \Tr^{\d\OO}: \prescript{}{0}{\U}^{k}_{q,p}(\RR_+, v_\mu; \OO, w_{\gam+kp}^{\d\OO})\to \prescript{}{0}{\G}_{q,p}(\RR_+, v_\mu; \d\OO)
    \end{equation}
    is continuous and surjective. The corresponding extension operator is denoted by $\ext^{\d\OO}$. First consider 
    \begin{equation}\label{eq:heateq_hom}
\left\{
  \begin{aligned}
  &\d_t \tilde{u} -\del \tilde{u} = \tilde{f}\qquad &(t,x)&\in \RR_+\times \OO,\\
   &\Tr^{\d\OO}\tilde{u}= 0\qquad &(t,x)&\in \RR_+\times \d\OO,\\
   &\tilde{u}(0)=0 & x&\in \OO,
\end{aligned}\right.
\end{equation}
    where $\tilde{f}:= f - (\d_t -\del)\ext^{\d\OO} g \in \F^{k}_{q,p}(\RR_+, v_\mu; \OO, w_{\gam+kp}^{\d\OO})$. By \cite[Corollary 6.7]{LLRV25}, there exists a solution $$\tilde{u} \in W^{1,q}(\RR_+, v_\mu; W^{k,p}(\OO, w_{\gam+kp}^{\d\OO})) \cap L^q(\RR_+, v_\mu; W_\Dir^{k+2,p}(\OO, w_{\gam+kp}^{\d\OO}))$$ (for the definition of the Sobolev spaces with homogenous Dirichlet boundary conditions we refer to \cite[Section 3.3]{LLRV25}) which satisfies the estimate
    \begin{equation}\label{eq:heateqest_tilde}
        \begin{aligned}
        \|\tilde{u}\|_{\U^{k}_{q,p}(\RR_+, v_\mu; \OO, w_{\gam+kp}^{\d\OO})}&\lesssim \|f\|_{\F^{k}_{q,p}(\RR_+, v_\mu; \OO, w_{\gam+kp}^{\d\OO})} + \|(\d_t-\del)\ext^{\d\OO}g\|_{\F^{k}_{q,p}(\RR_+, v_\mu; \OO, w_{\gam+kp}^{\d\OO})}\\
        &\lesssim \|f\|_{\F^{k}_{q,p}(\RR_+, v_\mu; \OO, w_{\gam+kp}^{\d\OO})} + \|\ext^{\d\OO}g\|_{\U^{k}_{q,p}(\RR_+, v_\mu; \OO, w_{\gam+kp}^{\d\OO})}.
    \end{aligned}
    \end{equation}
    Set $u:=\tilde{u}+ \ext^{\d\OO} g$. Then $u$ satisfies $\Tr^{\d\OO}u = \Tr^{\d\OO}\tilde{u} + \Tr^{\d\OO}\ext^{\d\OO} g =g$ by \eqref{eq:heateq_hom} and properties of the extension operator. Furthermore, by \eqref{eq:heateq_trace} and \eqref{eq:heateq_hom}, we have $u(0)=\tilde{u}(0)+\ext g (0) = 0$. Hence, by the construction of $\tilde{u}$ in \eqref{eq:heateq_hom}, we have that $u$ satisfies \eqref{eq:heateq_R+} and we have the estimate
    \begin{align*}
        \|u\|_{\U^{k}_{q,p}(\RR_+, v_\mu; \OO, w_{\gam+kp}^{\d\OO})}&\lesssim \|f\|_{\F^{k}_{q,p}(\RR_+, v_\mu; \OO, w_{\gam+kp}^{\d\OO})} + \|\ext^{\d\OO}g\|_{\U^{k}_{q,p}(\RR_+, v_\mu; \OO, w_{\gam+kp}^{\d\OO})}\\
        &\lesssim \|f\|_{\F^{k}_{q,p}(\RR_+, v_\mu; \OO, w_{\gam+kp}^{\d\OO})} + \|g\|_{\G_{q,p}(\RR_+, v_\mu; \d\OO)},
    \end{align*}
    using \eqref{eq:heateqest_tilde} and \eqref{eq:heateq_trace}. Finally, the uniqueness of the solution $u$ to \eqref{eq:heateq_R+} follows from \cite[Corollary 6.7]{LLRV25}.
\end{proof}

\begin{remark}\hspace{2em}\label{rem:heateq}
    \begin{enumerate}[(i)]
        \item Well-posedness and regularity for the heat equation with non-zero initial data in a real interpolation space can be obtained as well, see \cite[Section 4.4]{GV17} or \cite[Section 17.2.b]{HNVW24}. In the unweighted setting, characterisations of the real interpolation spaces between Sobolev spaces are well known. However, in the weighted setting, characterisations of real interpolation spaces and temporal trace spaces are more difficult, see \cite[Section 7.2]{LV18}.
        \item In \cite[Section 6]{LLRV25} the results are also valid in the $\UMD$ Banach space-valued case if $-\delDir$ is replaced by $\lambda-\delDir$ with $\lambda\geq \lambda_0$ for some $\lambda_0\geq 0$ depending on the geometry of the Banach space. Hence, a version of Theorem \ref{thm:heateq_0T} can be obtained in the vector-valued setting as well. For the special case $k=0$, a vector-valued version of Theorem \ref{thm:heateq_0T} can be obtained with $\lambda_0=0$, see \cite[Corollary 6.10]{LLRV25}.
        \item Instead of the parabolic problem one could also consider the elliptic problem $\lambda u -\del u = f$ on $\OO$ with $\lambda>0$ and  boundary conditions $u|_{\d\OO}=g$. If $g=0$, then well-posedness and regularity (in the weighted setting on rough domains) follows from \cite[Theorem 1.1]{LLRV25}. Combining this with the trace results on rough domains from \cite{KimD07}, yields well-posedness and regularity for the elliptic problem with inhomogeneous boundary conditions $g\neq 0$.
    \end{enumerate}
\end{remark}

\bibliographystyle{plain}
\bibliography{references_dampedplate}
\end{document}